\documentclass[a4paper]{amsart}
\usepackage{graphicx}
\usepackage{float}
\usepackage{amssymb}
\usepackage{amsmath}
\usepackage{mathtools}
\usepackage{bm}
\usepackage{mathrsfs}
\usepackage{extarrows}
\usepackage{tikz-cd}
\usepackage{enumerate} 
\usepackage{cases}
\usepackage{amsfonts}
\usepackage[colorlinks,linkcolor=blue,citecolor=blue]{hyperref}
\usepackage{latexsym, amssymb, amsmath, amsthm, bbm}
\usepackage[all]{xy}

\usepackage{anysize}\marginsize{30mm}{30mm}{30mm}{30mm}
\theoremstyle{plain}
\newtheorem{theorem}{Theorem}[section]
\newtheorem{proposition}[theorem]{Proposition}
\newtheorem{lemma}[theorem]{Lemma}
\newtheorem{corollary}[theorem]{Corollary}

\theoremstyle{definition} 
\newtheorem{definition}[theorem]{Definition}
\newtheorem{remark}[theorem]{Remark}
\newtheorem{example}[theorem]{Example}

\begin{document}

\title{Exact sequences of rt-categories}

\subjclass[2020]{18M05(Primary), 18G80, 16T05(Secondary)}
\keywords{Exact sequence, Hopf algebra, Tensor category, Rt-category}

\author{Menggao Li}
\address{Department of Mathematics, Nanjing University, Nanjing 210093, China} \email{602022210007@smail.nju.edu.cn}
\author{Gongxiang Liu}
\address{Department of Mathematics, Nanjing University, Nanjing 210093, China} \email{gxliu@nju.edu.cn}

\begin{abstract}
Our aim is to consider what the exact sequence for rt-categories is. For this, we introduce the notion of \emph{exact sequence of rt-categories}, modeled on exact sequences of finite tensor
categories. Our central result explores the relationship of exactness at different levels. Specifically, let $H_1\xrightarrow{f}H_2\xrightarrow{g}H_3$ be a sequence of finite-dimensional Hopf
algebras. We prove that $H_1\xrightarrow{f}H_2\xrightarrow{g}H_3$ is strictly exact if and only if
$H_1\text{-}\mathrm{comod}\xrightarrow{f_*}H_2\text{-}\mathrm{comod} \xrightarrow{g_*}H_3\text{-}\mathrm{comod}$ is an exact sequence of finite tensor categories and $g_*$ admits an exact left
adjoint, if and only if
$D^b_{H_1\text{-}\mathrm{comod}}(H_2\text{-}\mathrm{comod})
\to D^b(H_2\text{-}\mathrm{comod})
\to D^b(H_3\text{-}\mathrm{comod})$
 is an exact sequence of rt-categories and $f_*$ is fully faithful.
\end{abstract}

\maketitle 

\section{Introduction}
The theory of exact sequences of tensor categories was primarily developed by Brugui\`eres and Natale \cite{exact-tensor} as a categorical generalization of the exact sequences of Hopf algebras
\cite{1993Hopf,pri-exact-sequence}. As a foundational paper on exact sequences, it explores a very rich range of content. It reinterprets strictly exact sequences of Hopf algebras and
equivariantization under a finite group action on a tensor category and other notions in the language of exact sequences. Moreover, a finite exact sequence with adjoint functors $G_{l}$ and $G_{r}$ of
the latter functor $G$ can induce a central Hopf algebra $G_{r}(\mathbf{1})$, together with a normal, faithful, $\mathbbm{k}$-linear, right exact Hopf monad $GG_{l}$.

As the main purpose of the paper, we want to build the exact sequence for derived categories of finite tensor categories, or more generally for rt-categories. For this, we recall the definition of
exact sequences for tensor categories of Natale's version \cite{exact-tensor}  at first. Let $\mathcal{A}$, $\mathcal{B}$ and $\mathcal{C}$ be finite tensor categories over $\mathbbm{k}$. A sequence of
tensor functors
\[
\mathcal{A} \xrightarrow{F} \mathcal{B} \xrightarrow{G} \mathcal{C}
\]
is an \emph{exact sequence of finite tensor categories}  
if the following conditions hold:
\begin{enumerate}[\rm (1)]
\item The tensor functor $G$ is surjective and normal;
\item The tensor functor $F$ is fully faithful;
\item $\mathscr{I}_{F}=\mathscr{K}^{\oplus}_{G}$ (see Section 3 for notions).
\end{enumerate}

If we want to give the similar version for rt-categories, a fundamental challenge appears: the classical definitions of \emph{normality} and \emph{surjectivity} for tensor functors are intrinsically
tied to the underlying abelian structure of the categories involved. Specifically, these criteria rely on the well-behaved nature of subobjects, which does not admit a direct analogue in the setting of
monoidal triangulated categories. To better adapt to the rt-category setting, we introduce the following.

\begin{enumerate}
\item \emph{T-Triviality:} We define \emph{t-trivial objects} as objects belonging to the smallest monoidal thick subcategory which is compatible with the given t-structure.
\item \emph{Adjoint Characterization:} We replace the \emph{normality} and \emph{surjectivity} requirements with conditions formulated through \emph{adjoint functors}.
\item \emph{Zero-reflecting:} For monoidal triangulated functors, we use zero-reflecting as the analogue of the faithfulness required for tensor functors.
\end{enumerate}

After making these modifications, we introduce the following definition.
Let $(\mathcal T',\mathbbm t')$, $(\mathcal T,\mathbbm t)$ and $(\mathcal T'',\mathbbm t'')$ be rt-categories. A sequence of t-exact
zero-reflecting monoidal triangulated functors
\[
\mathcal T' \xrightarrow{F} \mathcal T \xrightarrow{G} \mathcal T''
\]
is called an \emph{exact sequence of rt-categories} if the following conditions hold:
\begin{enumerate}
    \item $F$ is fully faithful;
    \item $G$ admits a left adjoint $G_l$ which is zero-reflecting and t-exact,
    and $G_l(\mathbf 1_{\mathcal T''})\in \mathscr K_G^t$;
    \item $\mathscr I_F=\mathscr K_G^t$ (see Section 3 for notions).
\end{enumerate}

Using this, we show that:

\begin{enumerate}
\item \emph{From finite tensor categories to rt-categories}: Take an exact sequence of finite tensor categories $\mathcal{A}'\xrightarrow[]{F} \mathcal{A}\xrightarrow[]{G} \mathcal{A}''$. Then
    $D^b_{\mathcal A'}(\mathcal A) \xrightarrow{F'} D^b(\mathcal A) \xrightarrow{G'} D^b(\mathcal A'')$ is an exact sequence of rt-categories if $G$ admits an exact left adjoint.

\item \emph{From rt-category to finite tensor category}: Take an exact sequence of rt-categories $(\mathcal T',\mathbbm t') \xrightarrow{F} (\mathcal T,\mathbbm t) \xrightarrow{G} (\mathcal
    T'',\mathbbm t'')$. Then $(\mathcal T')^\heartsuit \xrightarrow{F^\heartsuit} \mathcal T^\heartsuit \xrightarrow{G^\heartsuit} (\mathcal T'')^\heartsuit$ is an exact sequence of finite tensor
    categories if the hearts are finite abelian categories.
\end{enumerate}

As a concrete application, we utilize these categorical structures to provide a new criterion for the exactness of sequences of finite-dimensional Hopf algebras, formulated through their comod
categories and derived categories. That is, we get the following main result (see Theorem \ref{thm:core}).

\begin{theorem}
Consider a sequence of finite-dimensional Hopf algebras
\[
K \xrightarrow{f} H \xrightarrow{g} T.
\]
Let
$F=f_*:K\text{-}\mathrm{comod}\to H\text{-}\mathrm{comod}$,
$G=g_*:H\text{-}\mathrm{comod}\to T\text{-}\mathrm{comod}$
be the induced tensor functors. The following are equivalent:
\begin{enumerate}[\rm (1)]
\item The sequence $K\xrightarrow{f}H\xrightarrow{g}T$ is a strictly exact sequence of finite-dimensional Hopf algebras.

\item The sequence
$K\text{-}\mathrm{comod} \xrightarrow{F} H\text{-}\mathrm{comod} \xrightarrow{G} T\text{-}\mathrm{comod}$
is an exact sequence of finite tensor categories, and $G$ admits an exact left adjoint.

\item The functor $F$ is fully faithful, and after identifying $K\text{-}\mathrm{comod}$ with $\mathscr I_F$ as a tensor category, and, with respect to the standard t-structures, the sequence
\[
D^b_{K\text{-}\mathrm{comod}}(H\text{-}\mathrm{comod}) \xrightarrow{\iota} D^b(H\text{-}\mathrm{comod}) \xrightarrow{D^b(G)} D^b(T\text{-}\mathrm{comod})
\]
is an exact sequence of rt-categories, where $\iota$ is the inclusion functor.
\end{enumerate}
\end{theorem}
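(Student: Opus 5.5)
The plan is to prove the cycle of implications $(1)\Rightarrow(2)\Rightarrow(3)\Rightarrow(2)\Rightarrow(1)$. The passages between (2) and (3) will come from the two transfer results announced in the introduction: exact sequences of finite tensor categories produce exact sequences of rt-categories, and conversely via hearts. The passage between (1) and (2) is a Hopf-algebraic statement, in the spirit of Brugui\`eres--Natale, where the extra hypothesis is the existence of an exact left adjoint to $G=g_*$.

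\textbf{(1) $\Rightarrow$ (2).} By the Brugui\`eres--Natale dictionary, a strictly exact sequence $K\to H\to T$ of finite-dimensional Hopf algebras gives an exact sequence of comodule categories:
\begin{itemize}
\item $f$ is injective, so $f_*$ is fully faithful, and its image is closed under subobjects;
\item $g$ is surjective, so $g_*$ is dominant;
\item normality of $G$ comes from the normality of $f(K)$;
\item the identification $\mathscr I_F=\mathscr K_G^{\oplus}$ uses $K=H^{\mathrm{co}\,T}$ (equivalently ${}^{\mathrm{co}\,T}H$).
\end{itemize}
For the left adjoint, $H$ is free (faithfully flat) over $K$ and $T\cong H/HK^+$ by Nichols--Zoeller. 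Hence $g_*$ has a left adjoint given by cotensor/induction along the coalgebra map $H\to T$, namely $V\mapsto H\square_T V$ or its dual variant. It is exact because $H$ is injective (coflat) as a $T$-comodule, since $H$ is a cofree $T$-comodule over a finite-dimensional Hopf algebra.

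\textbf{(2) $\Rightarrow$ (3).} Apply the first transfer result to the exact sequence $K\text{-}\mathrm{comod}\to H\text{-}\mathrm{comod}\to T\text{-}\mathrm{comod}$ with exact left adjoint $G_l$. This gives that $D^b_{\mathcal A'}(\mathcal A)\to D^b(\mathcal A)\to D^b(\mathcal A'')$ is an exact sequence of rt-categories. Full faithfulness of $F$ is part of (2). I would check that the functor $F'$ of that result is the inclusion $\iota$ once $K\text{-}\mathrm{comod}$ is identified with $\mathscr I_F$.

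\textbf{(3) $\Rightarrow$ (2).} The hearts of the standard t-structures are $H\text{-}\mathrm{comod}$, $T\text{-}\mathrm{comod}$, and, for $D^b_{\mathscr I_F}$, the full subcategory $\mathscr I_F\simeq K\text{-}\mathrm{comod}$. All of these are finite abelian, so the second transfer result shows the heart sequence is exact. The left adjoint $G_l$ of $D^b(G)$ is t-exact, so it restricts to a left adjoint of $G$ on hearts, namely $H^0G_l$. This restriction is exact because t-exactness sends the heart to the heart and short exact sequences to triangles in the heart.

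\textbf{(2) $\Rightarrow$ (1).} This is the main obstacle. From (2), $F$ fully faithful with image closed under subobjects gives $f$ injective, and dominance of $G$ gives $g$ surjective. The difficulty is recovering $K=H^{\mathrm{co}\,T}$ and normality, where the exact left adjoint must be used, since exact sequences of comodule categories alone need not come from strictly exact Hopf sequences. I would first set $A=G_r(\mathbf 1)$, which is $H^{\mathrm{co}\,T}$ viewed as a coideal subalgebra, and compare it with $G_l(\mathbf 1)$, which is $(H/H g^{-1}(\dots))$-type data. Exactness of $G_l$ should force $H$ to be coflat over $T$, hence faithfully coflat, and then Takeuchi's correspondence gives $H^{\mathrm{co}\,T}$ as a normal Hopf subalgebra with $T\cong H/H(H^{\mathrm{co}\,T})^+$. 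Finally, $\mathscr I_F=\mathscr K_G^{\oplus}$ identifies $f(K)$ with this Hopf subalgebra by comparing the subcoalgebras generated by trivial-image comodules. One should double-check that this coalgebra comparison is an equality and not just an inclusion.
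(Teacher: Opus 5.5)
Your arguments for (1)$\Rightarrow$(2), (2)$\Rightarrow$(3) and (3)$\Rightarrow$(2) are essentially the paper's. You use Brugui\`eres--Natale for exactness, coflatness of $H$ over $T$ for the adjoint, and Propositions~\ref{prop:exact-seq-derived} and~\ref{prop:heart-exact-seq} for the derived and heart transfers. One slip: $V\mapsto H\Box_T V$ is the \emph{right} adjoint of $g_*$; the left adjoint is $V\mapsto {}^*(H\Box_T V^*)$.

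The genuine gap is in (2)$\Rightarrow$(1). You leave it as a hedged sketch, and its central step is wrong as stated.
\begin{itemize}
\item Takeuchi's correspondence, given faithful coflatness of $H$ over $T$, only says that the coinvariants form a coideal subalgebra $A$ with $T\cong H/HA^+$.
\item $A$ is a (normal) Hopf subalgebra exactly when $\ker g$ is a normal Hopf ideal, and nothing in your plan establishes this.
\item The exact left adjoint cannot supply it. In finite dimension, once $g$ is surjective, $H$ is cofree over $T$ (the dual Nichols--Zoeller argument you use yourself in (1)$\Rightarrow$(2)). So exactness of $G_l$ is automatic and carries no information about normality.
\end{itemize}
The hypothesis that must be used is the normality of $G$, which your sketch never invokes.

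To close the gap directly, apply normality of $G=g_*$ to the regular comodule $H$.
\begin{itemize}
\item The largest trivial subobject of $g_*H$ is $A={}^{\mathrm{co}\,T}H=\{h\mid g(h_{(1)})\otimes h_{(2)}=1\otimes h\}$. Normality therefore says $\Delta(A)\subseteq H\otimes A$.
\item Since $\Delta(A)\subseteq A\otimes H$ always holds, $A$ is a subcoalgebra. It is also a subalgebra, hence a Hopf subalgebra by finite-dimensionality.
\item $A$ is normal, because $S(x_{(1)})\,a\,x_{(2)}\in A$ for all $a\in A$, $x\in H$, and $S$ is bijective.
\item $A$ is then the largest subcoalgebra on which $g=\varepsilon(-)1$. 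Hence $\mathscr K_G^\oplus$ is the category of $A$-comodules.
\item $\mathscr I_F$ is the category of $f(K)$-comodules, using that $f$ is injective as you observe.
\item A subcoalgebra is the sum of the coefficient coalgebras of its finite-dimensional comodules. So $\mathscr I_F=\mathscr K_G^\oplus$ forces $f(K)=A$ on the nose, which settles the equality-versus-inclusion question you left open.
\item Nichols--Zoeller (freeness of $H$ over $A$) and Takeuchi ($\ker g=HA^+$) then give strict exactness.
\end{itemize}
The paper sidesteps all of this. It uses reconstruction with respect to the forgetful functor on $T\text{-}\mathrm{comod}$ to identify the sequence with $L(UGF)\to L(UG)\to L(U)$. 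It then invokes Remark~\ref{rem:tensor-to-hopf}, which is where the exact-left-adjoint hypothesis actually enters.
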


The paper is organized as follows. In Section 2, we review the preliminaries regarding tensor categories and monoidal triangulated categories. In Section 3, we define exact sequences of rt-categories
and we explore the correspondence between exact sequences of rt-categories and exact sequences of finite tensor categories. Finally, we conclude by demonstrating that these categorical structures
provide an equivalent characterization for the strictly exact sequences of finite-dimensional Hopf algebras.

Throughout this paper, $\mathbbm{k}$ is an algebraically closed field of characteristic zero. All categories are assumed to be $\mathbbm{k}$-linear. For a coalgebra $C$, we denote by
$C\text{-}\operatorname{comod}$ the category of finite-dimensional left $C$-comodules. Unless otherwise stated, all monoidal categories are assumed to
be strict, that is, for all objects $X,Y,Z$ we have $(X\otimes Y)\otimes Z=X\otimes (Y\otimes Z)$ and $X\otimes \mathbf 1=\mathbf 1\otimes X=X$, and the associativity and unit constraints are identity
morphisms. Moreover, subcategories are always assumed to be full subcategories.

\section{Preliminaries}
In this section, we will recall some basic notions and related facts about triangulated categories, tensor categories, in particular about t-structures.

\subsection{Tensor categories and monoidal triangulated categories}
We first recall several definitions and properties related to tensor categories.
Readers are referred to \cite{tensorcategories} for more details.

\begin{definition}\textup{(\cite[Definition 4.1.1]{tensorcategories})}
A rigid monoidal abelian category $(\mathcal A,\otimes,\mathbf 1)$ is called a
\emph{multitensor category} if the bifunctor
$\otimes:\mathcal A\times\mathcal A\to\mathcal A$ is bilinear on morphisms. If,
in addition, $\operatorname{End}_{\mathcal A}(\mathbf 1)\cong\mathbbm k$, then
$\mathcal A$ is called a \emph{tensor category}.
\end{definition}

The tensor product in a multitensor category is exact in each variable
\textup{(see \cite[Proposition 4.2.1]{tensorcategories})}.

\begin{definition}
A \emph{monoidal triangulated category} $(\mathcal T,\otimes,\Sigma,\mathbf 1)$
is a triangulated category equipped with a monoidal structure such that:
\begin{enumerate}[\rm (1)]
\item the shift functor $\Sigma$ satisfies
$\Sigma(X\otimes Y)\cong(\Sigma X)\otimes Y\cong X\otimes(\Sigma Y)$;
\item the bifunctor $\otimes$ is exact in each variable.
\end{enumerate}
We write $X[1]$ for $\Sigma X$. A \emph{monoidal
triangulated functor} is a triangulated functor which preserves the monoidal
structures and the unit object.
\end{definition}

\begin{example}\textup{(\cite[Example 2.4]{m-t})}\label{Ex:D is m-t}
Let $(\mathcal{A},\otimes,\mathbf{1})$ be a monoidal abelian category with biexact tensor product. The category $C^b(\mathcal{A})$ of bounded chain complexes has a monoidal structure defined by the
total complex: for $X^\bullet,Y^\bullet\in C^b(\mathcal{A})$,
\[
(X^\bullet\widetilde{\otimes} Y^\bullet)^n=\bigoplus_{p+q=n}X^p\otimes Y^q
\]
with differential $d^n=\sum_{p+q=n}(d_X^p\otimes\mathrm{id}+(-1)^p\mathrm{id}\otimes d_Y^q)$. This makes $(C^b(\mathcal{A}),\widetilde{\otimes},\mathbf{1}^\bullet)$ a monoidal abelian category, where
$\mathbf{1}^\bullet$ is the stalk complex of $\mathbf{1}$ at degree $0$.
The structure descends to the bounded homotopy category $K^b(\mathcal{A})$ (since $\widetilde{\otimes}$ preserves null-homotopy) and further to the bounded derived category $D^b(\mathcal{A})$ (since
$\widetilde{\otimes}$ preserves quasi-isomorphisms by the Acyclic Assembly Lemma \cite[Lemma 2.7.3]{Weibel_1994}). Thus both $(K^b(\mathcal{A}),\widetilde{\otimes},\mathbf{1}^\bullet)$ and
$(D^b(\mathcal{A}),\widetilde{\otimes},\mathbf{1}^\bullet)$ are monoidal triangulated categories.
\end{example}

\begin{lemma}\label{l01}
For any rigid monoidal abelian category $\mathcal A$ with biexact tensor product, the derived category $(D^b(\mathcal A),\widetilde{\otimes},\mathbf 1^\bullet)$ is rigid.
\end{lemma}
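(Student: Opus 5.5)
The plan is to construct duals explicitly at the level of complexes and then check that the evaluation and coevaluation maps satisfy the zig-zag identities already in $C^b(\mathcal A)$. Since the localization functor $C^b(\mathcal A)\to D^b(\mathcal A)$ is monoidal (Example \ref{Ex:D is m-t}), it sends these identities to the corresponding ones in $D^b(\mathcal A)$. Every object of $D^b(\mathcal A)$ is represented by a bounded complex $X^\bullet$, so it suffices to exhibit a left dual and a right dual for each such complex in $C^b(\mathcal A)$.

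For a bounded complex $X^\bullet$, I define the candidate left dual $(X^\bullet)^*$ by
\[
((X^\bullet)^*)^n=(X^{-n})^*,\qquad d^n_{X^*}=(-1)^{n+1}(d_X^{-n-1})^*.
\]
The sign is to be fixed so that evaluation and coevaluation become chain maps. This complex is again bounded.

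Next I define evaluation and coevaluation.
\begin{itemize}
\item The evaluation $\mathrm{ev}:(X^\bullet)^*\widetilde\otimes X^\bullet\to\mathbf 1^\bullet$ is given in degree $0$ by $\sum_p \mathrm{ev}_{X^p}:\bigoplus_p (X^p)^*\otimes X^p\to\mathbf 1$ and is zero in other degrees.
\item The coevaluation $\mathbf 1^\bullet\to X^\bullet\widetilde\otimes (X^\bullet)^*$ is given in degree $0$ by $\sum_p \mathrm{coev}_{X^p}$, possibly with signs $(-1)^{p}$ inserted.
\end{itemize}

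The key steps are the following.
\begin{enumerate}
\item Verify that $\mathrm{ev}$ and $\mathrm{coev}$ are chain maps. For $\mathrm{ev}$, this means checking that the composite $((X^\bullet)^*\widetilde\otimes X^\bullet)^{-1}\to(\ \cdot\ )^0\to\mathbf 1$ vanishes. On the summand $(X^{p+1})^*\otimes X^p$, the two contributions are $\mathrm{ev}\circ(d^*\otimes\mathrm{id})$ and $\pm\,\mathrm{ev}\circ(\mathrm{id}\otimes d)$. These agree by the defining property of the dual morphism $d^*$, so the chosen sign makes them cancel. The check for $\mathrm{coev}$ is analogous.
\item Verify the zig-zag identities. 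Since both maps are concentrated in degree $0$, the composite $X^\bullet\to X^\bullet\widetilde\otimes X^{\bullet*}\widetilde\otimes X^\bullet\to X^\bullet$ is computed degreewise. In degree $n$, only the summands $X^p\otimes (X^p)^*\otimes X^n$ and $X^n\otimes (X^n)^*\otimes X^n$ pass through both maps. The degree-zero components force $p=n$, which leaves the componentwise zig-zag for $X^n$ in $\mathcal A$, up to the signs chosen.
\item Construct the right dual ${}^*X^\bullet$ in the same way, using ${}^*(X^{-n})$.
\end{enumerate}

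The main obstacle is the sign bookkeeping. The signs in the differential of $X^*$ and in the coevaluation must be chosen compatibly, so that both chain-map conditions hold and the zig-zag composites equal the identity rather than $\pm$ the identity in each degree. The Koszul sign $(-1)^p$ in $\widetilde\otimes$ interacts with both. I would first try taking no sign on $\mathrm{ev}$, putting $(-1)^{p}$ on the $p$-th component of $\mathrm{coev}$, and adjusting the sign in $d_{X^*}$ accordingly.

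If the signs resist, there is a fallback: rather than insisting on identities on the nose in $C^b(\mathcal A)$, it suffices to obtain the zig-zag composites as isomorphisms (signs $\pm1$ per degree that assemble into a chain automorphism). One can then correct $\mathrm{coev}$ by the inverse automorphism. Finally, I would note that the localization functor preserves tensor products by Example \ref{Ex:D is m-t}, so duals in $C^b(\mathcal A)$, hence in $K^b(\mathcal A)$, descend to duals in $D^b(\mathcal A)$. This proves that $D^b(\mathcal A)$ is rigid.
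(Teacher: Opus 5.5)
Your proposal is correct and is essentially the argument the paper has in mind (the paper only says the proof is similar to \cite[Example 2.27]{m-t}): take the dual complex $((X^\bullet)^*)^n=(X^{-n})^*$ with evaluation and coevaluation concentrated in degree $0$, check the zig-zag identities in $C^b(\mathcal A)$, and push them forward along the strict monoidal localization $C^b(\mathcal A)\to D^b(\mathcal A)$. One correction to your sign heuristics: with your differential $d^n_{X^*}=(-1)^{n+1}(d_X^{-n-1})^*$ and unsigned $\mathrm{ev}$, the chain-map condition for $\mathrm{coev}$ forces its component signs to be constant in $p$, so you should take $\mathrm{coev}^0=\sum_p\mathrm{coev}_{X^p}$ with no signs; inserting $(-1)^p$ would break the chain-map condition, and adjusting $d_{X^*}$ cannot compensate because $\mathrm{ev}$ already pins that sign down. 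With this choice both zig-zag composites are the identity on the nose, so the fallback is not needed.
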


The proof is similar to \cite[Example 2.27]{m-t}.

\begin{definition}
    A functor $F$ is called \emph{zero-reflecting} if, whenever $F(X)$ is a zero object, $X$ itself is a zero object.
\end{definition}

Let $F$ be an exact functor between abelian categories. It is easy to see that $F$ is zero-reflecting if and only if it is faithful. And $D^b(F)$ is zero-reflecting if $F$ is zero-reflecting.

\subsection{Tensor reduced t-structure and heart} We begin with the definition of a t-structure.

\begin{definition}\textup{(\cite[Definition 1.3.1]{Tri-Recollement})}
A pair of full subcategories $\mathbbm{t}=(\mathcal{T}^{\leq 0},\mathcal{T}^{\geq 1})$ in a triangulated category $\mathcal{T}$ is said to be a \emph{t-structure} on $\mathcal{T}$, if
$\mathcal{T}^{\leq 0}$ and $\mathcal{T}^{\geq 1}$ satisfy the following conditions:
\begin{enumerate}[\rm (T1)]
\item $\mathcal{T}^{\leq 0}[1]\subseteq\mathcal{T}^{\leq 0}$ and $\mathcal{T}^{\geq 1}\subseteq\mathcal{T}^{\geq 1}[1]$;
\item $\operatorname{Hom}_{\mathcal{T}}(\mathcal{T}^{\leq 0},\mathcal{T}^{\geq 1})=0$;
\item For any object $X\in\mathcal{T}$, there is a distinguished triangle
\[
X^{\leq 0}\longrightarrow X\longrightarrow X^{\geq 1}\longrightarrow X^{\leq 0}[1],
\]
where $X^{\leq 0}\in\mathcal{T}^{\leq 0}$ and $X^{\geq 1}\in\mathcal{T}^{\geq 1}$.
\end{enumerate}
\end{definition}

For further use, we recall the following standard notation \textup{(cf. \cite[\S 1.3]{Tri-Recollement})}. Let $\mathbbm t_{\mathcal T}=(\mathcal T^{\leq 0},\mathcal T^{\geq 1})$ be a t-structure on
$\mathcal T$. For $n\in\mathbb Z$, set $\mathcal T^{\leq n}:=\mathcal T^{\leq 0}[-n]$ and
$\mathcal T^{\geq n+1}:=\mathcal T^{\geq 1}[-n]$. The \emph{heart} $\mathcal T_{\mathbbm t}^{\heartsuit}:= \mathcal T^{\leq 0}\cap\mathcal T^{\geq 0}$ is an abelian category, and there is a cohomology
functor $H^0_{\mathbbm t}:\mathcal T\to\mathcal T_{\mathbbm t}^{\heartsuit}$. We write $\mathcal T^\heartsuit$ for $\mathcal T_{\mathbbm t}^{\heartsuit}$ and $H^0$ for $H^0_{\mathbbm t}$ when no
confusion arises. The t-structure $\mathbbm t_{\mathcal T}$ is called \emph{bounded} if, for every object $X\in\mathcal T$, one has $X[n]\in\mathcal T^{\leq 0}$ for all $n\gg0$ and $X[n]\in\mathcal
T^{\geq 1}$ for all $n\ll0$.

\begin{definition}\textup{(\cite[Definition 2.10]{m-t})}
A bounded t-structure $\mathbbm t=(\mathcal T^{\leq 0},\mathcal T^{\geq 1})$ on a monoidal triangulated category $\mathcal T$ is called a \emph{monoidal t-structure} if there exists $n\in\mathbb Z$
such that \begin{enumerate}[\rm (1)]
\item $\mathcal T^{\geq 0}\otimes\mathcal T^{\geq n} \subseteq\mathcal T^{\geq n}$;
\item $\mathcal T^{\leq 0}\otimes\mathcal T^{\leq n} \subseteq\mathcal T^{\leq n}$.
\end{enumerate}
The set of integers $n$ satisfying conditions \textup{(1)} and \textup{(2)} is called the \emph{deviation} of $\mathbbm t$ and is denoted by $\operatorname{dev}(\mathbbm t)$.
\end{definition}

A monoidal triangulated category equipped with a monoidal t-structure $\mathbbm t$ such that $0\in\operatorname{dev}(\mathbbm t)$ is called a \emph{monoidal t-category}. If, in addition, the underlying
monoidal triangulated category is rigid and $\operatorname{End}_{\mathcal T}(\mathbf 1)\cong\mathbbm k$, we call it a \emph{tensor t-category}.

\begin{definition}\textup{(\cite[Definition 2.18]{m-t})}
A monoidal additive category $\mathcal A$ is called \emph{tensor reduced} if, for any object $X\in\mathcal A$, one has $X\otimes X=0$ if and only if $X=0$. A monoidal t-structure $\mathbbm t$ on a
monoidal triangulated category $\mathcal T$ with $0\in\operatorname{dev}(\mathbbm t)$ is called
\emph{tensor reduced} if its heart $\mathcal T^\heartsuit$ is tensor reduced as a monoidal additive category.
\end{definition}

\begin{corollary}\textup{(\cite[Proposition 2.25, Corollary 2.26]{m-t})}\label{heart}
Let $\mathbbm t$ be a tensor reduced monoidal t-structure on a monoidal triangulated category $\mathcal T$.
\begin{enumerate}[\rm (1)]
\item $H^0_{\mathbbm t}(\mathbf 1)\cong\mathbf 1$ in $\mathcal T_{\mathbbm t}^\heartsuit$.
\item If $\mathcal T$ has left duals, then $(\mathcal T^{\leq 0})^*\subseteq\mathcal T^{\geq 0}$ and $(\mathcal T^{\geq 0})^*\subseteq\mathcal T^{\leq 0}$. In particular, $\mathcal T^\heartsuit$ has
    left duals.
\item If $\mathcal T$ has right duals, then ${}^*(\mathcal T^{\leq 0})\subseteq\mathcal T^{\geq 0}$ and ${}^*(\mathcal T^{\geq 0})\subseteq\mathcal T^{\leq 0}$. In particular, $\mathcal T^\heartsuit$
    has right duals.
\item if $\mathcal T$ is a tensor t-category, then $\mathcal T^\heartsuit$ is a tensor category.
\end{enumerate}
\end{corollary}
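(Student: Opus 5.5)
The plan is to use only the two inclusions $\mathcal T^{\leq 0}\otimes\mathcal T^{\leq 0}\subseteq\mathcal T^{\leq 0}$ and $\mathcal T^{\geq 0}\otimes\mathcal T^{\geq 0}\subseteq\mathcal T^{\geq 0}$ (which is what $0\in\operatorname{dev}(\mathbbm t)$ means), together with tensor reducedness of the heart for (1), and the duality adjunctions for (2)--(4). Shifting these inclusions gives $\mathcal T^{\leq a}\otimes\mathcal T^{\leq b}\subseteq\mathcal T^{\leq a+b}$ and $\mathcal T^{\geq a}\otimes\mathcal T^{\geq b}\subseteq\mathcal T^{\geq a+b}$. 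Taking $a=b=0$, the heart is closed under $\otimes$, so $\mathcal T^\heartsuit$ is monoidal with the restricted tensor product, once we know the unit lies in it.

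For (1) I would show $\mathbf 1\in\mathcal T^\heartsuit$; then $H^0(\mathbf 1)\cong\mathbf 1$ is immediate. Since $\mathbbm t$ is bounded, $\mathbf 1$ has only finitely many nonzero cohomologies. Suppose some $H^i(\mathbf 1)\neq 0$ with $i>0$, and let $n>0$ be the largest such index, so $\mathbf 1\in\mathcal T^{\leq n}$.

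The key "top cohomology" step is as follows. For $X\in\mathcal T^{\leq n}$ and $Y\in\mathcal T^{\leq m}$, apply $\otimes$ to the truncation triangles $\tau^{\leq n-1}X\to X\to H^n(X)[-n]\to$ and similarly for $Y$. The shifted inclusions show the error terms lie in $\mathcal T^{\leq n+m-1}$. Hence
\[
H^{n+m}(X\otimes Y)\cong H^0\bigl(H^n(X)\otimes H^m(Y)\bigr)=H^n(X)\otimes H^m(Y),
\]
the last equality because the heart is closed under $\otimes$. Applying this with $X=Y=\mathbf 1$ and $\mathbf 1\otimes\mathbf 1=\mathbf 1$ gives $H^{2n}(\mathbf 1)\cong H^n(\mathbf 1)\otimes H^n(\mathbf 1)$. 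Since $2n>n$, maximality of $n$ forces the left side to vanish, so $H^n(\mathbf 1)\otimes H^n(\mathbf 1)=0$. Tensor reducedness then gives $H^n(\mathbf 1)=0$, a contradiction.

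The dual argument uses the lowest cohomology and the $\mathcal T^{\geq}$ inclusions, via the triangle $H^{n}(X)[-n]\to X\to\tau^{\geq n+1}X$. It excludes nonzero $H^i(\mathbf 1)$ with $i<0$. So $\mathbf 1\in\mathcal T^\heartsuit$. I expect this bookkeeping of which error terms land in which aisle to be the main obstacle. Everything else is formal.

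For (2), let $X\in\mathcal T^{\leq 0}$. To see $X^*\in\mathcal T^{\geq 0}$ it suffices that $\operatorname{Hom}(Z,X^*)=0$ for all $Z\in\mathcal T^{\leq -1}$. By the duality adjunction, $\operatorname{Hom}(Z,X^*)\cong\operatorname{Hom}(Z\otimes X,\mathbf 1)$. Here $Z\otimes X\in\mathcal T^{\leq -1}$, while $\mathbf 1\in\mathcal T^{\geq 0}$ by (1), so the Hom vanishes by (T2).

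Similarly, let $X\in\mathcal T^{\geq 0}$ and $W\in\mathcal T^{\geq 1}$. Then $\operatorname{Hom}(X^*,W)\cong\operatorname{Hom}(\mathbf 1,W\otimes X)$, with the tensor order fixed by the convention for left duals. Now $W\otimes X\in\mathcal T^{\geq 1}$ and $\mathbf 1\in\mathcal T^{\leq 0}$, so this vanishes and $X^*\in\mathcal T^{\leq 0}$. Consequently, for $X$ in the heart, $X^*$ is in the heart. The evaluation and coevaluation maps are morphisms between objects of the heart, since the heart is closed under $\otimes$ and contains $\mathbf 1$. The zig-zag identities hold already in $\mathcal T$, so $\mathcal T^\heartsuit$ has left duals. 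Part (3) is the mirror argument with right duals and the opposite adjunctions.

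For (4), $\mathcal T^\heartsuit$ is abelian, being the heart of a t-structure. It is monoidal with unit $\mathbf 1$ by the above, and $\otimes$ is bilinear since it is the restriction of the bilinear tensor on $\mathcal T$. It is rigid by (2) and (3). Finally, $\operatorname{End}_{\mathcal T^\heartsuit}(\mathbf 1)=\operatorname{End}_{\mathcal T}(\mathbf 1)\cong\mathbbm k$ because the heart is a full subcategory. Hence $\mathcal T^\heartsuit$ is a tensor category in the sense of the definition recalled above.
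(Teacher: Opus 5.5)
Your proof is correct. The paper gives no argument of its own for this statement; it imports it from \cite{m-t} (Proposition 2.25 and Corollary 2.26 there). So your write-up is a self-contained replacement for a citation, not a variant of an in-paper proof.

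The structure you chose is the natural one:
\begin{itemize}
\item Degree-additivity, $\mathcal T^{\leq a}\otimes\mathcal T^{\leq b}\subseteq\mathcal T^{\leq a+b}$ and its $\geq$ analogue, gives the top- and bottom-degree K\"unneth isomorphisms $H^{n+m}(X\otimes Y)\cong H^n(X)\otimes H^m(Y)$.
\item Applied to $\mathbf 1\otimes\mathbf 1\cong\mathbf 1$, these force $H^n(\mathbf 1)\otimes H^n(\mathbf 1)=0$ for the extreme nonzero degree $n\neq 0$, so $H^n(\mathbf 1)=0$ by tensor reducedness. Your argument shows exactly where boundedness, $0\in\operatorname{dev}(\mathbbm t)$ and tensor reducedness each enter.
\item Parts (2) and (3) then need only $\mathbf 1\in\mathcal T^\heartsuit$, the duality adjunctions, and the orthogonality descriptions $\mathcal T^{\geq 0}=(\mathcal T^{\leq -1})^{\perp}$ and $\mathcal T^{\leq 0}={}^{\perp}(\mathcal T^{\geq 1})$.
\end{itemize}

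Two small remarks.

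First, take the usual convention $\operatorname{ev}_X\colon X^*\otimes X\to\mathbf 1$ and $\operatorname{coev}_X\colon \mathbf 1\to X\otimes X^*$. Then the adjunction you need in the second half of (2) is $\operatorname{Hom}(X^*,W)\cong\operatorname{Hom}(\mathbf 1,X\otimes W)$, not $W\otimes X$. As you anticipated, this is harmless: the shifted inclusions put both $X\otimes W$ and $W\otimes X$ in $\mathcal T^{\geq 1}$.

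Second, your conclusion in (4) is right for the definition of tensor category recalled in the paper. That definition omits the local finiteness built into \cite[Definition 4.1.1]{tensorcategories}. Nothing in the hypotheses gives finite length or finite-dimensional Hom spaces in the heart. This is why the paper separately assumes the hearts are finite abelian in Proposition~\ref{prop:heart-exact-seq}.
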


We say that a tensor t-category is \emph{rt-category} if it is tensor reduced.

\begin{example}
Let $\mathcal{A}$ be an abelian category. The \emph{standard t-structure}
(or \emph{canonical t-structure}) on $D^b(\mathcal{A})$ is defined by:
\begin{align*}
{D}^{\leq 0}(\mathcal{A}) &= \{X \in D^b(\mathcal{A}) \mid H^i(X) = 0 \text{ for all } i > 0\}, \\
{D}^{\geq 1}(\mathcal{A}) &= \{X \in D^b(\mathcal{A}) \mid H^i(X) = 0 \text{ for all } i < 1\}.
\end{align*}
The heart ${\mathcal{A}}^\heartsuit = {D}^{\leq 0}(\mathcal{A}) \cap {D}^{\geq 0}(\mathcal{A})$ is naturally equivalent to $\mathcal{A}$.

In particular, when $\mathcal{A}$ is a finite tensor category, the unit object $\mathbf{1}$ is simple. Hence $X$ is a subobject of $X\otimes X\otimes X^{*}$ via the monomorphism $id_{X}\otimes
coev_{X}$. So this standard t-structure is tensor reduced.
\end{example}

A triangulated functor between categories with t-structures is called
left, right, or fully t-exact in the usual sense. We shall use the standard fact
\cite[Proposition 1.3.17]{Tri-Recollement} that a t-exact functor restricts to
an exact functor on the hearts.

\begin{definition}\textup{(\cite[\S 1.3.16]{Tri-Recollement})}
Let $\mathcal{T}$ and $\mathcal{T'}$ be triangulated categories with t-structures. A triangulated functor $F:\mathcal{T}\to\mathcal{T'}$ is called \emph{t-exact} if $F(\mathcal{T}^{\geq
0})\subseteq\mathcal{T'}^{\geq 0}$ and $F(\mathcal{T}^{\leq 0})\subseteq\mathcal{T'}^{\leq 0}$.
\end{definition}

By \cite[Proposition 1.3.17]{Tri-Recollement}, it follows that if $F$ is t-exact, then $F^\heartsuit=F|_{\mathcal{T}^\heartsuit}$ is exact.

\subsection{Thick subcategory compatible with t-structure} Let $\mathcal{T}$ be a triangulated category with a bounded t-structure $\mathbbm{t}=(\mathcal{T}^{\leq 0}, \mathcal{T}^{\geq 1})$ and
$\mathcal{I}$ a thick subcategory. Let $Q:\mathcal{T}\to \mathcal{T}/\mathcal{I}$ be the quotient functor.

\begin{definition}\textup{(\cite[Definition 3.5]{perverse})}
$\mathcal{I}$ is said to be compatible with $\mathbbm{t}$ if $\mathbbm{t}_{\mathcal{T}/\mathcal{I}} := (Q(\mathcal{T}^{\leq 0}), Q(\mathcal{T}^{\geq 1}))$ is a t-structure on $\mathcal{T}/\mathcal{I}$.
\end{definition}


\begin{lemma}\textup{(\cite[Lemma 3.10]{perverse})}\label{lem:serre-thick}
There is a bijection
\[
\{\text{thick subcategories of } \mathcal{T} \text{ compatible with }
\mathbbm{t}\} \longleftrightarrow \{\text{Serre subcategories of } \mathcal{T}^{\heartsuit}\}
\]
given by $\mathcal{I} \mapsto \mathcal{I} \cap \mathcal{T}^{\heartsuit}$ and
$\mathcal{J} \mapsto \mathcal{T}_\mathcal{J} = \{X \in \mathcal{T} \mid
H^i(X) \in \mathcal{J}, \forall i \in\mathbb{Z}\}$.
\end{lemma}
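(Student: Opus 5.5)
We prove the bijection directly by showing that the two assignments are well defined and mutually inverse, using throughout that $\mathbbm t$ is bounded, so every object is an iterated extension of the shifts $H^i(X)[-i]$ of its finitely many nonzero cohomology objects.

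\emph{Step 1: $\mathcal J\mapsto\mathcal T_{\mathcal J}$ is well defined.} Let $\mathcal J$ be a Serre subcategory of $\mathcal T^\heartsuit$. Then $\mathcal T_{\mathcal J}$ is closed under shifts and direct summands, since $H^i$ is additive and $\mathcal J$ is closed under subobjects. For a triangle $X\to Y\to Z\to X[1]$ with $X,Z\in\mathcal T_{\mathcal J}$, the long exact cohomology sequence puts each $H^i(Y)$ between a quotient of $H^i(X)$ and a subobject of $H^i(Z)$, so $H^i(Y)\in\mathcal J$. Hence $\mathcal T_{\mathcal J}$ is thick.

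For compatibility, we show that $(Q(\mathcal T^{\le0}),Q(\mathcal T^{\ge1}))$ is a t-structure on $\mathcal T/\mathcal T_{\mathcal J}$.
\begin{enumerate}[\rm (a)]
\item Axioms (T1) and (T3) pass directly through the exact functor $Q$.
\item The real content is (T2): $\operatorname{Hom}(QX,QY)=0$ for $X\in\mathcal T^{\le0}$ and $Y\in\mathcal T^{\ge1}$.
\end{enumerate}
A morphism in the Verdier quotient is a roof $X\xleftarrow{s}X'\to Y$ with $\operatorname{cone}(s)\in\mathcal T_{\mathcal J}$. We would first replace $X'$ by $\tau^{\le0}X'$. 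The cone of $\tau^{\le0}X'\to X$ still has cohomology in $\mathcal J$, because $H^i(X)=0$ for $i>0$ and the relevant cohomology objects in degrees $\le0$ are controlled by the long exact sequence. After this replacement the morphism $\tau^{\le0}X'\to Y$ vanishes by (T2) in $\mathcal T$. Making this precise is the main obstacle: one must check that the cone of the truncated roof stays in $\mathcal T_{\mathcal J}$. In degree $0$ one gets an extension of a subobject of $H^0(\operatorname{cone})$-type terms, and closure of $\mathcal J$ under subobjects, quotients and extensions is exactly what is needed. An alternative is to cite the standard result that $\mathcal T/\mathcal T_{\mathcal J}$ carries a t-structure whose heart is $\mathcal T^\heartsuit/\mathcal J$.

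\emph{Step 2: $\mathcal I\mapsto\mathcal I\cap\mathcal T^\heartsuit$ is well defined.} Suppose $\mathcal I$ is compatible with $\mathbbm t$. Then $Q$ is t-exact by construction, so $Q^\heartsuit:\mathcal T^\heartsuit\to(\mathcal T/\mathcal I)^\heartsuit$ is exact, and $\mathcal I\cap\mathcal T^\heartsuit=\ker Q^\heartsuit$. The kernel of an exact functor between abelian categories is a Serre subcategory.

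\emph{Step 3: the maps are mutually inverse.}
\begin{enumerate}[\rm (a)]
\item $\mathcal T_{\mathcal J}\cap\mathcal T^\heartsuit=\mathcal J$ holds trivially.
\item For $\mathcal I=\mathcal T_{\mathcal I\cap\mathcal T^\heartsuit}$, the inclusion $\supseteq$ follows by induction on the cohomological length of $X$, using the truncation triangles and the thickness of $\mathcal I$. For $\subseteq$, let $X\in\mathcal I$. Then $QX=0$, and $t$-exactness of $Q$ gives $Q(H^i X)=H^i(QX)=0$. Hence $H^iX\in\ker Q\cap\mathcal T^\heartsuit=\mathcal I\cap\mathcal T^\heartsuit$, since $\ker Q=\mathcal I$ for a thick $\mathcal I$.
\end{enumerate}
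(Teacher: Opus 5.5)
Your proof is correct. It is a different route only in that the paper gives no argument at all: it imports the bijection verbatim from \cite[Lemma 3.10]{perverse}. Your self-contained version makes the structure visible:
\begin{enumerate}[\rm (a)]
\item Compatibility of $\mathcal I$ amounts to t-exactness of $Q$. Hence $\mathcal I\cap\mathcal T^\heartsuit=\ker Q^\heartsuit$ is automatically Serre.
\item The inclusion $\mathcal I\subseteq\mathcal T_{\mathcal I\cap\mathcal T^\heartsuit}$ follows from $Q(H^iX)\cong H^i(QX)$. Boundedness is needed only for the reverse inclusion.
\item The one non-formal input is axiom (T2) on $\mathcal T/\mathcal T_{\mathcal J}$, which is exactly what the citation hides.
\end{enumerate}
The citation buys brevity. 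Your argument buys a clear account of which hypotheses are used where.

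The (T2) step is not really an obstacle, and it can be stated more cleanly than you did. Take the triangle $X'\xrightarrow{s}X\to C\to X'[1]$ with $C\in\mathcal T_{\mathcal J}$ and $H^i(X)=0$ for $i\geq 1$. The long exact sequence shows that $H^i(X')$ is a quotient of $H^{i-1}(C)$ for $i\geq 1$, so $\tau^{\geq 1}X'\in\mathcal T_{\mathcal J}$. Hence $Q(\iota)$ is invertible for $\iota:\tau^{\leq 0}X'\to X'$, and
\[
Q(f)Q(s)^{-1}=Q(f\iota)\,Q(s\iota)^{-1}=0,
\]
since $f\iota:\tau^{\leq 0}X'\to Y$ vanishes by (T2) in $\mathcal T$.

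This uses only closure of $\mathcal J$ under quotients. You never need the cone of the composite $s\iota$. Your remark about an extension appearing in degree $0$ is inaccurate. Computed directly, that cone $C'$ has:
\begin{enumerate}[\rm (i)]
\item $H^0(C')=\operatorname{coker}(H^0X'\to H^0X)$, which is a subobject of $H^0(C)$;
\item $H^i(C')=0$ for $i>0$;
\item $H^i(C')\cong H^i(C)$ for $i<0$.
\end{enumerate}
So $C'\in\mathcal T_{\mathcal J}$ anyway.
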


\section{Exact sequences of rt-categories}
In this section, we introduce the definition of exact sequences of rt-categories and explores the relationship of exactness at different levels.

\subsection{The definition of exact sequences of rt-categories}
In this subsection, we introduce exact sequences of rt-categories. The definition is modeled on the classical exact sequences of finite tensor categories.

We first recall the classical notions from \cite{exact-tensor}. Let $\mathcal A$ be a finite tensor category. An object of $\mathcal A$ is called \emph{trivial} if it is isomorphic to a finite direct
sum of copies of the unit object. For a tensor functor $F:\mathcal A\to\mathcal A'$, its kernel is the full subcategory
\[
\mathscr K_F^\oplus = \{X\in\mathcal A\mid F(X)\text{ is a trivial object of }\mathcal A'\}.
\]
We also denote by $\mathscr I_F$ the essential image of $F$. A sequence of tensor functors
$\mathcal A' \xrightarrow{F} \mathcal A \xrightarrow{G} \mathcal A''$
between finite tensor categories is an exact sequence if $F$ is fully faithful, $G$ is surjective and normal, and $\mathscr I_F=\mathscr K_G^\oplus$. We now pass to the rt-categorical analogue.

\begin{definition}
Let $(\mathcal T,\mathbbm t)$ be an rt-category. We denote by $\mathcal V^t_{\mathcal T}$ the smallest monoidal thick subcategory of
$\mathcal T$ which closed under duals and is compatible with $\mathbbm t$. Objects of $\mathcal V^t_{\mathcal T}$ are called \emph{t-trivial objects}.
\end{definition}

\begin{definition}
Let $F:\mathcal T\to\mathcal T'$ be a monoidal triangulated functor between rt-categories. The \emph{t-kernel} of $F$ is the full subcategory
\[
\mathscr K_F^t = \{X\in\mathcal T\mid F(X)\text{ is a t-trivial object of }\mathcal T'\}.
\]
The \emph{essential image} of $F$ is the full subcategory
\[
\mathscr I_F = \{X\in\mathcal T'\mid X\cong F(T)\text{ for some }T\in\mathcal T\}.
\]
\end{definition}

\begin{definition}
Let $(\mathcal T',\mathbbm t')$, $(\mathcal T,\mathbbm t)$ and $(\mathcal T'',\mathbbm t'')$ be rt-categories. A sequence of t-exact
zero-reflecting monoidal triangulated functors
\[
\mathcal T' \xrightarrow{F} \mathcal T \xrightarrow{G} \mathcal T''
\]
is called an \emph{exact sequence of rt-categories} if the following conditions hold:
\begin{enumerate}
    \item $F$ is fully faithful;
    \item $G$ admits a left adjoint $G_l$ which is zero-reflecting and t-exact,
    and $G_l(\mathbf 1_{\mathcal T''})\in \mathscr K_G^t$;
    \item $\mathscr I_F=\mathscr K_G^t$.
\end{enumerate}
\end{definition}

\subsection{Exact sequences from bounded derived categories}
We now relate this definition to bounded derived categories of finite tensor categories. Let $(\mathcal A,\otimes,\mathbf 1)$ be a finite tensor category and set $\mathcal D:=D^b(\mathcal A)$. Equipped
with the standard t-structure, $\mathcal D$ is an rt-category. We first describe the t-trivial objects in $\mathcal D$.

\begin{lemma}\label{lem:D^b_A(B)}
Let $\mathcal A$ be a monoidal abelian category with biexact tensor product, and let $\mathcal S\subseteq \mathcal A$ be a monoidal Serre subcategory. Set
\[
D^b_{\mathcal S}(\mathcal A):=\{X\in D^b(\mathcal A)\mid H^i(X)\in\mathcal S \text{ for all }i\in\mathbb Z\}.
\]
Then $D^b_{\mathcal S}(\mathcal A)$ is the smallest monoidal thick subcategory of $D^b(\mathcal A)$ containing $D^b(\mathcal S)$.
If $\mathcal A$ is rigid and $\mathcal S$ is closed under duals, then $D^b_{\mathcal S}(\mathcal A)$ is closed under duals.
\end{lemma}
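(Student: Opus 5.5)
We have to show three things: $D^b_{\mathcal S}(\mathcal A)$ is a monoidal thick subcategory, it is the smallest one containing $D^b(\mathcal S)$, and, in the rigid case, it is closed under duals. Here $D^b(\mathcal S)$ is understood as its image in $D^b(\mathcal A)$, namely the complexes with all terms in $\mathcal S$.

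\textbf{Thickness.} The subcategory $D^b_{\mathcal S}(\mathcal A)$ is closed under shifts and isomorphisms. It is closed under direct summands because $H^i(X\oplus Y)=H^i(X)\oplus H^i(Y)$ and $\mathcal S$ is closed under subobjects. For extensions, take a triangle $X\to Y\to Z\to X[1]$ with $X,Z\in D^b_{\mathcal S}(\mathcal A)$. The long exact cohomology sequence gives an exact piece $H^i(X)\to H^i(Y)\to H^i(Z)$. Thus $H^i(Y)$ is an extension of a subobject of $H^i(Z)$ by a quotient of $H^i(X)$, and it lies in $\mathcal S$ because Serre subcategories are closed under subobjects, quotients and extensions. (Alternatively, this part is the ``$\mathcal J\mapsto\mathcal T_{\mathcal J}$'' direction of Lemma \ref{lem:serre-thick}.)

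\textbf{Monoidality.} I must show $X\widetilde\otimes Y\in D^b_{\mathcal S}(\mathcal A)$ whenever $X\in D^b_{\mathcal S}(\mathcal A)$ and $Y\in D^b(\mathcal A)$, and symmetrically. I read ``monoidal Serre subcategory'' as closed under tensoring with arbitrary objects of $\mathcal A$ on both sides, i.e.\ a tensor ideal. First, any bounded $X$ lies in the triangulated subcategory generated by its cohomology stalks $H^i(X)[-i]$, via the truncation triangles. Likewise $Y$ is built from its terms $Y^j[-j]$ by iterated cones, using the brutal truncations. Since $\widetilde\otimes$ is exact in each variable and $D^b_{\mathcal S}(\mathcal A)$ is triangulated, it suffices to treat stalks. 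For $S\in\mathcal S$ and $A\in\mathcal A$, the complex $S[-i]\widetilde\otimes A[-j]$ is the stalk $(S\otimes A)[-i-j]$, and $S\otimes A\in\mathcal S$. The same argument handles $A\otimes S$.

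\textbf{Minimality.} First, $D^b(\mathcal S)\subseteq D^b_{\mathcal S}(\mathcal A)$, since a complex with terms in $\mathcal S$ has cohomology in $\mathcal S$. Conversely, let $\mathcal I$ be any thick subcategory containing $D^b(\mathcal S)$. It contains every stalk $S[n]$ with $S\in\mathcal S$. Every $X\in D^b_{\mathcal S}(\mathcal A)$ is an iterated extension of its stalks $H^i(X)[-i]$, by induction on the cohomological length using the truncation triangles $\tau^{\le n-1}X\to\tau^{\le n}X\to H^n(X)[-n]\to{}$. Hence $X\in\mathcal I$, so $D^b_{\mathcal S}(\mathcal A)$ is already the smallest thick subcategory containing $D^b(\mathcal S)$, and a fortiori the smallest monoidal one. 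The only subtle point is this dévissage, which is standard.

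\textbf{Duals.} By Lemma \ref{l01} duals exist in $D^b(\mathcal A)$ and are computed termwise: $(X^\bullet)^*$ has terms $(X^{-n})^*$. Because duality is exact on a rigid abelian category, $H^i(X^*)\cong H^{-i}(X)^*$. If $\mathcal S$ is closed under duals, then $X^*\in D^b_{\mathcal S}(\mathcal A)$, and the same holds for right duals. The step I expect to need the most care is the identification of the derived dual with the termwise dual complex, with the correct sign conventions. A cleaner alternative avoids it: duality is a triangulated anti-equivalence, so it carries the dévissage of $X$ into its stalks to one of $X^*$ into the stalks $(H^i(X)[-i])^*\cong H^i(X)^*[i]$. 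This shows $X^*$ lies in the thick subcategory generated by objects of $\mathcal S$, hence in $D^b_{\mathcal S}(\mathcal A)$.
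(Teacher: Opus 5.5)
The thickness, minimality and duality steps match the paper's proof and are fine: minimality by decomposing into cohomology stalks via truncation, and duals via $H^{-i}(X^*)\cong H^i(X)^*$ from exactness of duality. The gap is in the monoidality step, which rests on reading ``monoidal Serre subcategory'' as a two-sided tensor ideal.

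In this paper a monoidal Serre (resp.\ thick) subcategory contains $\mathbf 1$ and is closed under tensor products of its own objects. This is how the lemma is applied later, to $\mathcal V^\oplus$ (finite sums of $\mathbf 1$) and to $\mathscr I_F$. Under the ideal reading, the t-trivial subcategory $\mathcal V^t_{\mathcal T}$ would collapse to $0$, since a tensor ideal containing $\mathbf 1$ is everything.

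With the intended hypothesis, your argument breaks where you decompose $Y\in D^b(\mathcal A)$ into its terms $Y^j[-j]$ by brutal truncation and use $S\otimes A\in\mathcal S$ for arbitrary $A\in\mathcal A$. That closure fails for $\mathcal S=\mathcal V^\oplus$. The property you set out to prove, $D^b_{\mathcal S}(\mathcal A)\widetilde\otimes D^b(\mathcal A)\subseteq D^b_{\mathcal S}(\mathcal A)$, is then false: take $X=\mathbf 1^\bullet$ and $Y$ a non-trivial object. You also never check $\mathbf 1^\bullet\in D^b_{\mathcal S}(\mathcal A)$, which monoidality in the paper's sense requires.

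The repair stays within your method. For $X,Y\in D^b_{\mathcal S}(\mathcal A)$, decompose both factors by the cohomological truncation triangles, rather than decomposing $Y$ by brutal truncation. Since $\widetilde\otimes$ is exact in each variable and $D^b_{\mathcal S}(\mathcal A)$ is triangulated, it suffices that $H^i(X)[-i]\,\widetilde\otimes\,H^j(Y)[-j]\cong (H^i(X)\otimes H^j(Y))[-i-j]$ lies in $D^b_{\mathcal S}(\mathcal A)$. This holds because $\mathcal S$ is closed under $\otimes$, and $\mathbf 1\in\mathcal S$ gives $\mathbf 1^\bullet\in D^b_{\mathcal S}(\mathcal A)$.

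The paper instead computes cohomology directly with the K\"unneth isomorphism $H^n(X\widetilde\otimes Y)\cong\bigoplus_{p+q=n}H^p(X)\otimes H^q(Y)$. The corrected d\'evissage reaches the same conclusion without citing that formula, using only the thickness you already established.
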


\begin{proof}
In fact, $D^b_{\mathcal S}(\mathcal A)$ is a thick subcategory of $D^b(\mathcal A)$ containing $D^b(\mathcal S)$.

We next prove the minimality. Let $\mathcal I\subseteq D^b(\mathcal A)$ be a thick subcategory containing $D^b(\mathcal S)$. By the standard truncation triangles \cite[Proposition
13.1.15]{triangulated-categories}, every object $X^\bullet\in D^b_{\mathcal S}(\mathcal A)$ lies in the thick subcategory generated by the objects $H^i(X^\bullet)[-i]$. Since $H^i(X^\bullet)\in\mathcal
S$, we have $H^i(X^\bullet)[-i]\in\mathcal I$ for all $i$, and therefore $X^\bullet\in\mathcal I$. Thus $D^b_{\mathcal S}(\mathcal A)$ is the smallest thick subcategory of $D^b(\mathcal A)$ containing
$D^b(\mathcal S)$.

It remains to check that this thick subcategory is monoidal. Let $X^\bullet,Y^\bullet\in D^b_{\mathcal S}(\mathcal A)$. By
\cite[Theorem 4.1]{BK-tt-cate}, for all $n\in\mathbb Z$ we have
\[
H^n(X^\bullet\widetilde{\otimes} Y^\bullet) \cong \bigoplus_{p+q=n}H^p(X^\bullet)\otimes H^q(Y^\bullet).
\]
Since $\mathcal S$ is a monoidal Serre subcategory, each $H^p(X^\bullet)\otimes H^q(Y^\bullet)$ belongs to $\mathcal S$, and hence the
finite direct sum above also belongs to $\mathcal S$. Thus $X^\bullet\otimes Y^\bullet\in D^b_{\mathcal S}(\mathcal A)$. Moreover
$\mathbf 1\in\mathcal S$, so the stalk complex $\mathbf 1^\bullet$ belongs to $D^b_{\mathcal S}(\mathcal A)$. Therefore $D^b_{\mathcal S}(\mathcal A)$ is a monoidal thick subcategory. The minimality
proved above implies that $D^b_{\mathcal S}(\mathcal A)$ is the smallest monoidal thick subcategory of $D^b(\mathcal A)$ containing $D^b(\mathcal S)$.

Finally assume that $\mathcal A$ is rigid and that $\mathcal S$ is closed under duals. We prove the claim for left duals; the proof for right duals is similar. Let $X^\bullet\in D^b_{\mathcal
S}(\mathcal A)$, and let $Z^\bullet$ be the left dual complex of $X^\bullet$, so that $Z^{-i}=(X^i)^*$ with the usual signed dual differential. By \cite[Proposition 4.2.9]{tensorcategories}, left
dualization is exact. Hence the signs in the dual differential do not affect kernels, images, or cokernels, and a direct computation gives $H^{-i}(Z^\bullet)\cong H^i(X^\bullet)^*$ for all $i\in\mathbb
Z$. Indeed, this follows by dualizing the short exact sequence
\[
0\longrightarrow H^i(X^\bullet) \longrightarrow \operatorname{coker}(d_X^{i-1}) \longrightarrow \operatorname{im}(d_X^i) \longrightarrow 0.
\]
Since $H^i(X^\bullet)\in\mathcal S$ and $\mathcal S$ is closed under duals, we have $H^i(X^\bullet)^*\in\mathcal S$. Therefore
$H^{-i}(Z^\bullet)\in\mathcal S$ for all $i$, so $Z^\bullet\in D^b_{\mathcal S}(\mathcal A)$. Thus $D^b_{\mathcal S}(\mathcal A)$ is closed under left duals, and similarly it is closed under right
duals.
\end{proof}

\begin{lemma}\label{lem:rigidserre-rigidthick}
Let $\mathcal A$ be a monoidal abelian category with biexact tensor product, and let $\mathbbm t$ be the standard t-structure on $D^b(\mathcal A)$. We regard $\mathcal A\simeq D^b(\mathcal
A)^\heartsuit$ as a full subcategory of $D^b(\mathcal A)$ via stalk complexes. Then there is a bijection between monoidal thick subcategories of $D^b(\mathcal A)$ compatible with $\mathbbm t$ and
monoidal Serre subcategories of $\mathcal A$, given by
\[
\mathcal I\longmapsto \mathcal I\cap\mathcal A, \qquad \mathcal J\longmapsto D^b_{\mathcal J}(\mathcal A).
\]
Moreover, if $\mathcal A$ is rigid, then this bijection restricts to a bijection between $\mathbbm t$-compatible dual-closed monoidal thick subcategories of $D^b(\mathcal A)$ and dual-closed monoidal Serre subcategories of $\mathcal A$.
\end{lemma}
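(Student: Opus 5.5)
The plan is to restrict the bijection of Lemma \ref{lem:serre-thick}, with $\mathcal T=D^b(\mathcal A)$ and the standard t-structure, to the monoidal (and then dual-closed) objects on each side. Since this is already a bijection between $\mathbbm t$-compatible thick subcategories and Serre subcategories of $\mathcal A$, given by the same two maps, two checks suffice. First, if $\mathcal I$ is a compatible monoidal thick subcategory, then $\mathcal I\cap\mathcal A$ is a monoidal Serre subcategory. Second, if $\mathcal J$ is a monoidal Serre subcategory, then $D^b_{\mathcal J}(\mathcal A)$ is monoidal. The two maps are then automatically mutually inverse on the restricted classes.

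For the first check, take $X,Y\in\mathcal I\cap\mathcal A$, regarded as stalk complexes in degree $0$. Their product $X\widetilde\otimes Y$ is the stalk complex $X\otimes Y$, which lies in $\mathcal I$ because $\mathcal I$ is monoidal, and in $\mathcal A$ because it is concentrated in degree $0$. The unit $\mathbf 1^\bullet$ lies in $\mathcal I$, and it is a stalk complex, so $\mathbf 1\in\mathcal I\cap\mathcal A$.

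For the second check, I invoke Lemma \ref{lem:D^b_A(B)} directly: for a monoidal Serre subcategory $\mathcal J$, the subcategory $D^b_{\mathcal J}(\mathcal A)$ is a monoidal thick subcategory. By Lemma \ref{lem:serre-thick} it is compatible with $\mathbbm t$. The only mild point is that Lemma \ref{lem:serre-thick} identifies $\mathcal J\mapsto\mathcal T_{\mathcal J}$, and $\mathcal T_{\mathcal J}$ is exactly $D^b_{\mathcal J}(\mathcal A)$, since the t-cohomology of the standard t-structure is ordinary cohomology.

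For the rigid refinement, the same two maps are used again.
\begin{enumerate}[\rm (a)]
\item If $\mathcal J$ is closed under duals, the last part of Lemma \ref{lem:D^b_A(B)} gives that $D^b_{\mathcal J}(\mathcal A)$ is closed under duals.
\item Conversely, suppose $\mathcal I$ is closed under duals and $X\in\mathcal I\cap\mathcal A$. The dual of the stalk complex $X$ in $D^b(\mathcal A)$ is the stalk complex $X^*$ in degree $0$: the dual complex has $Z^{-i}=(X^i)^*$, and this is consistent with Lemma \ref{l01}. Hence $X^*\in\mathcal I\cap\mathcal A$, and likewise ${}^*X\in\mathcal I\cap\mathcal A$.
\end{enumerate}

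The step I expect to need the most care is confirming that duals of stalk complexes in $D^b(\mathcal A)$ agree with duals in $\mathcal A$, so that dual-closure transfers in both directions. I would verify this by checking that the evaluation and coevaluation maps of $\mathcal A$, viewed as maps of stalk complexes, still satisfy the zigzag identities in $D^b(\mathcal A)$. Uniqueness of duals then identifies the two.
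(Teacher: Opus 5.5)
Your proposal is correct and takes essentially the same approach as the paper. It restricts the bijection of Lemma~\ref{lem:serre-thick} and uses Lemma~\ref{lem:D^b_A(B)} for monoidality and dual-closure of $D^b_{\mathcal J}(\mathcal A)$; it then checks directly on stalk complexes that $\mathcal I\cap\mathcal A$ contains $\mathbf 1$, is closed under $\otimes$, and inherits dual-closure. Your extra remarks only make explicit what the paper uses tacitly: that $\mathcal T_{\mathcal J}=D^b_{\mathcal J}(\mathcal A)$ for the standard t-structure, and that the stalk-complex embedding is monoidal, so duals in $\mathcal A$ remain duals in $D^b(\mathcal A)$.
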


\begin{proof}
By Lemma~\ref{lem:serre-thick}, the assignments
\[
\mathcal I\longmapsto \mathcal I\cap\mathcal A, \qquad \mathcal J\longmapsto D^b_{\mathcal J}(\mathcal A)
\]
are mutually inverse bijections between thick subcategories of $D^b(\mathcal A)$ compatible with $\mathbbm t$ and Serre subcategories of
$\mathcal A$. It remains to check that these bijections preserve the monoidal condition and the condition of being closed under duals.

Let $\mathcal J$ be a monoidal Serre subcategory of $\mathcal A$. Then Lemma~\ref{lem:D^b_A(B)} implies that $D^b_{\mathcal J}(\mathcal A)$ is a
monoidal thick subcategory of $D^b(\mathcal A)$. If, moreover, $\mathcal A$ is rigid and $\mathcal J$ is closed under duals, then the same lemma shows that $D^b_{\mathcal J}(\mathcal A)$ is closed
under duals.
Conversely, let $\mathcal I$ be a monoidal thick subcategory of $D^b(\mathcal A)$ compatible with $\mathbbm t$. Then $\mathcal I\cap\mathcal A$ is a Serre subcategory of $\mathcal A$ by
Lemma~\ref{lem:serre-thick}. Since $\mathcal I$ is monoidal, it contains $\mathbf 1^\bullet$. Hence $\mathbf 1\in\mathcal I\cap\mathcal A$. Moreover, if $X,Y\in\mathcal I\cap\mathcal A$, then,
regarding them as stalk complexes, we have $X\widetilde{\otimes} Y\in\mathcal I$; and since $X\widetilde{\otimes} Y$ is again an object of $\mathcal A$, it follows that $X\widetilde{\otimes}
Y\in\mathcal I\cap\mathcal A$. Thus $\mathcal I\cap\mathcal A$ is a monoidal Serre subcategory of $\mathcal A$.

If $\mathcal A$ is rigid and $\mathcal I$ is closed under duals, then for every $X\in\mathcal I\cap\mathcal A$, its left and right duals in $\mathcal A$, regarded as stalk complexes in $D^b(\mathcal
A)$, belong to $\mathcal I$ and still lie in $\mathcal A$. Hence $\mathcal I\cap\mathcal A$ is closed under duals.
\end{proof}

Let $\mathcal V^\oplus\subseteq\mathcal A$ be the trivial subcategory of $\mathcal{A}$. Since $\mathbbm k$ has characteristic $0$ and $\mathcal A$ is a finite tensor category, every object of $\mathcal
A$ has finite length, the unit object $\mathbf 1$ is simple, and $\operatorname{Ext}^1(\mathbf 1,\mathbf 1)=0$ \textup{(see \cite[Theorem 4.3.8 and Theorem 4.4.1]{tensorcategories})}.
It follows that $\mathcal V^\oplus$ is a Serre subcategory of $\mathcal A$. Indeed, subobjects and quotients of finite direct sums of copies of $\mathbf 1$ are again finite direct sums of copies of
$\mathbf 1$, and every extension of such objects splits. Moreover, $\mathcal V^\oplus$ is monoidal. Since every Serre subcategory of $\mathcal A$ containing $\mathbf 1$ contains all finite direct sums
of copies of $\mathbf 1$, $\mathcal V^\oplus$ is the smallest monoidal Serre subcategory of $\mathcal A$.

Therefore, by Lemma~\ref{lem:rigidserre-rigidthick}, the t-trivial subcategory of $\mathcal D$ is $D^b_{\mathcal V^\oplus}(\mathcal A)$. Thus
\[
\mathcal V^t_\mathcal{D} = \{X^\bullet\in\mathcal D \mid H^i(X^\bullet)\cong \mathbf 1^{\oplus n_i} \text{ for some } n_i\geq 0 \text{ and for all } i\in\mathbb Z\}.
\]
We now show that exact sequences of finite tensor categories give rise to exact sequences of rt-categories after passing to bounded derived categories.

\begin{proposition}\label{prop:exact-seq-derived}
Let
\[
\mathcal A' \xrightarrow{F} \mathcal A \xrightarrow{G} \mathcal A''
\]
be an exact sequence of finite tensor categories over $\mathbbm k$. Assume that $G$ admits an exact left adjoint functor $G_l$. Then
\[
D^b_{\mathcal A'}(\mathcal A) \xrightarrow{F'} D^b(\mathcal A) \xrightarrow{G'} D^b(\mathcal A'')
\]
is an exact sequence of rt-categories, where $F'$ is the inclusion functor and $G'=D^b(G)$ is the functor induced by $G$.
\end{proposition}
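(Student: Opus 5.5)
We have to check, in turn, the hypotheses of the definition of an exact sequence of rt-categories. We identify $\mathcal A'$ with the full tensor subcategory $\mathscr I_F\subseteq\mathcal A$, using that $F$ is fully faithful. Since $\mathscr I_F=\mathscr K^\oplus_G$, this subcategory is a Serre subcategory of $\mathcal A$: $G$ is exact and faithful, and $\mathcal V^\oplus_{\mathcal A''}$ is Serre in $\mathcal A''$. It is also monoidal, because $G$ is monoidal and trivial objects are closed under $\otimes$. It is closed under duals, because $G(X^*)\cong G(X)^*$ and duals of trivial objects are trivial.

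With this identification, Lemma~\ref{lem:D^b_A(B)} and Lemma~\ref{lem:rigidserre-rigidthick} show that $D^b_{\mathcal A'}(\mathcal A)$ is a $\mathbbm t$-compatible, dual-closed, monoidal thick subcategory of $D^b(\mathcal A)$. It therefore inherits a t-structure $(\mathcal D^{\le0}\cap D^b_{\mathcal A'}(\mathcal A),\mathcal D^{\ge1}\cap D^b_{\mathcal A'}(\mathcal A))$, because truncations of an object with cohomology in $\mathcal A'$ again have cohomology in $\mathcal A'$. Its heart is $\mathcal A'$, which is a finite tensor category and is tensor reduced as in the standard example. Its unit has $\operatorname{End}(\mathbf 1)=\mathbbm k$, and rigidity follows from the dual-closedness just established together with Lemma~\ref{l01}. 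Hence $D^b_{\mathcal A'}(\mathcal A)$ is an rt-category, and the inclusion $F'$ is fully faithful, t-exact, monoidal and zero-reflecting.

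Next we treat $G'=D^b(G)$ and its adjoint. Since $G$ is exact, $G'$ is computed termwise, so $H^i(G'X)\cong G(H^iX)$. It follows that $G'$ is t-exact and monoidal, since the total tensor complex is preserved termwise. It is zero-reflecting because $G$ is faithful, as remarked after the definition of zero-reflecting functors. The exact left adjoint $G_l$ induces $G'_l:=D^b(G_l)$, and the termwise adjunction unit and counit make $G'_l\dashv G'$ on bounded derived categories; both functors are exact, so no derivation is needed. The functor $G'_l$ is t-exact for the same reason as $G'$. To see that it is zero-reflecting, it suffices to show that $G_l$ is faithful. Because $G$ is surjective, every object $Y$ of $\mathcal A''$ is a subquotient of some $G(X)$. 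If $G_l(Y)=0$ then $\operatorname{Hom}(Y,G(X))=\operatorname{Hom}(G_lY,X)=0$ for all $X$. I expect to need the finite (Frobenius-type) structure here: the counit-type epimorphism $G(G_l(Y))\to$ relevant objects, or equivalently that the composite $G G_l$ is a faithful Hopf monad as in \cite{exact-tensor}, to conclude $Y=0$. As a fallback, I would use that $\mathcal A''$ has enough injectives, each of the form $G$ of something, so that $Y$ embeds into some $G(X)$.

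For the condition $G'_l(\mathbf 1)\in\mathscr K^t_{G'}$, the task is to show that $G(G_l(\mathbf 1))$ is trivial. By the normality of $G$ together with the adjunction, the largest trivial subobject of $GG_l(\mathbf 1)$ and the Hopf monad $GG_l$ being normal give exactly this; this is the Brugui\`eres–Natale statement that $GG_l(\mathbf 1)$ is trivial for a normal functor with left adjoint. I would cite that result, with the derived version following because $G'_l(\mathbf 1)$ is a stalk complex.

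Finally we prove $\mathscr I_{F'}=\mathscr K^t_{G'}$. By the description of $\mathcal V^t_{\mathcal D}$ preceding the statement, applied to $D^b(\mathcal A'')$, an object $X$ lies in $\mathscr K^t_{G'}$ iff every $H^i(G'X)\cong G(H^iX)$ is trivial, iff every $H^iX\in\mathscr K^\oplus_G=\mathscr I_F=\mathcal A'$, iff $X\in D^b_{\mathcal A'}(\mathcal A)$. The main obstacle is the adjoint condition: proving faithfulness of $G_l$ and triviality of $GG_l(\mathbf 1)$ from normality and surjectivity. For this I would lean on \cite{exact-tensor}.
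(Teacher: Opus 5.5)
Your proposal is correct and follows essentially the same route as the paper. Both identify $\mathcal A'$ with $\mathscr I_F=\mathscr K^\oplus_G$, use Lemmas~\ref{lem:D^b_A(B)} and~\ref{lem:rigidserre-rigidthick} to make $D^b_{\mathcal A'}(\mathcal A)$ an rt-category, take $G'_l=D^b(G_l)$, invoke \cite[Propositions 5.1 and 3.5]{exact-tensor} for faithfulness of $G_l$ (from surjectivity) and for $G_l(\mathbf 1)\in\mathscr K^\oplus_G$ (from normality), and compare kernels via $H^i(G'X)\cong G(H^iX)$. Your hedged self-contained argument for faithfulness of $G_l$ is unnecessary once you cite that result. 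Note also that, as written, your fallback needs every injective of $\mathcal A''$ to be a direct summand of some $G(X)$, not literally of the form $G(X)$.
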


\begin{proof}
By the exactness of the sequence of finite tensor categories, we identify $\mathcal A'$ with $\mathscr I_F=\mathscr K_G^\oplus$. In particular, $\mathcal A'$ is a monoidal Serre subcategory of
$\mathcal A$ and is closed under duals. Hence, by Lemma~\ref{lem:rigidserre-rigidthick}, $D^b_{\mathcal A'}(\mathcal A)$, equipped with the induced standard t-structure, is an rt-category.

The functor $F'$ is the inclusion functor, so it is fully faithful, monoidal, triangulated, t-exact and zero-reflecting. Since $G$ is an exact tensor functor, it induces a monoidal triangulated functor
$G'=D^b(G):D^b(\mathcal A)\to D^b(\mathcal A'')$, and $G'$ is t-exact for the standard t-structures. Moreover, since tensor functors between finite tensor categories are faithful, $G'$ is
zero-reflecting.

Since $G_l$ is exact, it induces a t-exact triangulated functor $G_l'=D^b(G_l):D^b(\mathcal A'')\to D^b(\mathcal A)$. By \cite[Proposition 2.3.5]{Dfunctor}, the adjunction $G_l\dashv G$ induces an
adjunction $G_l'\dashv G'$. Since $G$ is surjective, by \cite[Proposition 5.1]{exact-tensor}, the left adjoint $G_l$ is faithful. Thus $G_l'$ is zero-reflecting.
By the normality of $G$ and \cite[Proposition 3.5]{exact-tensor}, we have $G_l(\mathbf 1_{\mathcal A''})\in\mathscr K_G^\oplus$. Then the stalk complex $G_l(\mathbf 1_{\mathcal
A''})^\bullet=G_l'(\mathbf 1_{\mathcal A''}^\bullet)$ belongs to $\mathscr K_{G'}^t$. Thus the second condition in the definition of an exact sequence of rt-categories is satisfied.

It remains to prove $\mathscr I_{F'}=\mathscr K_{G'}^t$. Since $F'$ is the inclusion functor, $\mathscr I_{F'}=D^b_{\mathcal A'}(\mathcal A)$. For $X^\bullet\in D^b(\mathcal A)$, the exactness of $G$
gives $H^i(G'(X^\bullet))\cong G(H^i(X^\bullet))$ for all $i$. Hence $X^\bullet\in\mathscr K_{G'}^t$ if and only if $G(H^i(X^\bullet))$ is ordinary trivial for all $i$, equivalently
$H^i(X^\bullet)\in\mathscr K_G^\oplus$ for all $i$. Since the original sequence is exact, $\mathscr K_G^\oplus=\mathscr I_F=\mathcal A'$ under the above identification. Therefore $X^\bullet\in\mathscr
K_{G'}^t$ if and only if $X^\bullet\in D^b_{\mathcal A'}(\mathcal A)$. Thus $\mathscr K_{G'}^t=D^b_{\mathcal A'}(\mathcal A)=\mathscr I_{F'}$.
\end{proof}

\subsection{Exact sequences from hearts}

The preceding proposition shows that exact sequences of finite tensor categories give rise to exact sequences of rt-categories by passing to bounded derived categories. We now discuss the converse
operation, namely taking hearts. This will show that the derived construction is compatible with the original exact sequence.

\begin{proposition}\label{prop:heart-exact-seq}
Let
\[
(\mathcal T',\mathbbm t') \xrightarrow{F} (\mathcal T,\mathbbm t) \xrightarrow{G} (\mathcal T'',\mathbbm t'')
\]
be an exact sequence of rt-categories.
Assume that the hearts $(\mathcal T')^\heartsuit$, $\mathcal T^\heartsuit$ and $(\mathcal T'')^\heartsuit$ are finite abelian categories.
Then the induced sequence on hearts
\[
(\mathcal T')^\heartsuit \xrightarrow{F^\heartsuit} \mathcal T^\heartsuit \xrightarrow{G^\heartsuit} (\mathcal T'')^\heartsuit
\]
is an exact sequence of finite tensor categories and $G^\heartsuit$ admits an exact left adjoint functor.
\end{proposition}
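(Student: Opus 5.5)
We first record what passing to hearts gives for free. Each of $\mathcal T',\mathcal T,\mathcal T''$ is an rt-category, so by Corollary~\ref{heart}(4) its heart is a tensor category; since the hearts are assumed finite abelian, they are finite tensor categories. The functors $F$, $G$ and $G_l$ are t-exact, so by \cite[Proposition 1.3.17]{Tri-Recollement} they restrict to exact functors $F^\heartsuit$, $G^\heartsuit$, $G_l^\heartsuit$ between the hearts. Because $F$ and $G$ are monoidal and t-exact, and the heart is closed under $\otimes$ (as $0\in\operatorname{dev}$), $F^\heartsuit$ and $G^\heartsuit$ are tensor functors. Since they are exact and zero-reflecting, they are faithful by the remark after the definition of zero-reflecting. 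The adjunction $G_l\dashv G$ restricts to $G_l^\heartsuit\dashv G^\heartsuit$, because all the categories involved are full subcategories. Hence $G^\heartsuit$ has the exact left adjoint $G_l^\heartsuit$, which is again faithful. Full faithfulness of $F^\heartsuit$ is immediate from that of $F$.

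Next we identify t-trivial objects in the heart. The plan is to show $\mathcal V^t_{\mathcal T''}\cap(\mathcal T'')^\heartsuit=\mathcal V^\oplus$, the trivial objects. The inclusion $\supseteq$ holds because $\mathbf 1\in\mathcal V^t$ and $\mathcal V^t$ is additive. For $\subseteq$, the key tool is Lemma~\ref{lem:serre-thick}. The heart $(\mathcal T'')^\heartsuit$ is a finite tensor category in characteristic zero, so by the argument preceding Proposition~\ref{prop:exact-seq-derived} the subcategory $\mathcal V^\oplus$ is a monoidal, dual-closed Serre subcategory. Let $\mathcal T''_{\mathcal V^\oplus}$ be the $\mathbbm t''$-compatible thick subcategory corresponding to $\mathcal V^\oplus$. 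We then show that $\mathcal T''_{\mathcal V^\oplus}$ is monoidal and closed under duals.

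\textbf{Main obstacle.} For an abstract rt-category we lack the Künneth formula used in Lemma~\ref{lem:D^b_A(B)}, so this step needs a different argument. We would argue by dévissage. Any object $X\in\mathcal T''_{\mathcal V^\oplus}$ lies in the thick subcategory generated by the shifts $H^i(X)[-i]\cong\mathbf 1^{n_i}[-i]$, by boundedness and the truncation triangles. The full subcategory of objects $Y$ with $X\otimes Y\in\mathcal T''_{\mathcal V^\oplus}$ for every $X\in\mathcal T''_{\mathcal V^\oplus}$ is thick, since $\otimes$ is exact in each variable. It contains $\mathbf 1[k]$, because $X\otimes\mathbf 1[k]=X[k]$, so it contains all of $\mathcal T''_{\mathcal V^\oplus}$. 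Closure under duals is handled the same way: dualization is a triangulated (anti)equivalence and $\mathbf 1^*=\mathbf 1$. By minimality of $\mathcal V^t_{\mathcal T''}$ we get $\mathcal V^t_{\mathcal T''}\subseteq\mathcal T''_{\mathcal V^\oplus}$, and intersecting with the heart gives the claim. Consequently $\mathscr K^t_G\cap\mathcal T^\heartsuit=\mathscr K^\oplus_{G^\heartsuit}$.

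It remains to verify the three axioms for the heart sequence.

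\emph{Image equals kernel.} By condition (3), $\mathscr I_{F^\heartsuit}\subseteq\mathscr K^t_G\cap\mathcal T^\heartsuit=\mathscr K^\oplus_{G^\heartsuit}$. Conversely, let $X\in\mathcal T^\heartsuit$ with $G(X)$ trivial. Then $X\cong F(T')$ for some $T'\in\mathcal T'$. Since $F$ is t-exact and fully faithful, $F(H^i(T'))\cong H^i(X)$, which vanishes for $i\ne0$. As $F$ is zero-reflecting, $T'$ lies in the heart of $\mathcal T'$. Hence $\mathscr I_{F^\heartsuit}=\mathscr K^\oplus_{G^\heartsuit}$.

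\emph{Normality.} Condition (2) gives $G_l(\mathbf 1)\in\mathscr K^t_G$. Since $G_l$ is t-exact, $G_l(\mathbf 1)$ lies in the heart, so $G_l^\heartsuit(\mathbf 1)\in\mathscr K^\oplus_{G^\heartsuit}$. By \cite[Proposition 3.5]{exact-tensor}, this is equivalent to normality of $G^\heartsuit$.

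\emph{Surjectivity.} Since $G_l^\heartsuit$ is faithful, \cite[Proposition 5.1]{exact-tensor} (in its converse direction, as in the proof of Proposition~\ref{prop:exact-seq-derived}) shows that $G^\heartsuit$ is surjective. If only one direction of that result is available, we would instead show directly that every $Y\in(\mathcal T'')^\heartsuit$ is a subquotient of $G^\heartsuit G_l^\heartsuit(Y)$, using the unit of the adjunction, which is monic because $G_l^\heartsuit$ is faithful, together with rigidity.
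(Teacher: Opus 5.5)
Your proof is correct and follows the same route as the paper: restrict $F$, $G$, $G_l$ and the adjunction $G_l\dashv G$ to the hearts, obtain surjectivity and normality of $G^\heartsuit$ from the faithfulness of $G_l^\heartsuit$ and from $G_l(\mathbf 1)\in\mathscr K^t_G$ via Brugui\`eres--Natale's Propositions 5.1 and 3.5, and conclude with $\mathscr I_{F^\heartsuit}=\mathscr I_F\cap\mathcal T^\heartsuit=\mathscr K^t_G\cap\mathcal T^\heartsuit=\mathscr K^\oplus_{G^\heartsuit}$. Your d\'evissage proving $\mathcal V^t_{\mathcal T''}\cap(\mathcal T'')^\heartsuit=\mathcal V^\oplus$ and your zero-reflection argument that an $F$-preimage of a heart object lies in the heart supply details the paper only asserts (Lemma~\ref{lem:rigidserre-rigidthick} is proved only for $D^b(\mathcal A)$); the one ingredient you take for granted, exactness of duality, can be bypassed by using Corollary~\ref{heart}(2),(3) and the universal property of truncations to get $H^i(X^*)\cong H^{-i}(X)^*$.
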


\begin{proof}
By Proposition~\ref{heart} and the assumption that the hearts are finite abelian categories, the three hearts are finite tensor categories. Since $F$ and $G$ are t-exact monoidal triangulated functors,
they restrict to exact tensor functors $F^\heartsuit$ and $G^\heartsuit$ on the hearts. Moreover, $F^\heartsuit$ is fully faithful because $F$ is fully faithful.

Let $G_l$ be the t-exact zero-reflecting left adjoint of $G$. Since both $G_l$ and $G$ are t-exact, the adjunction $G_l\dashv G$ restricts to an adjunction $G_l^\heartsuit\dashv G^\heartsuit$ on the
hearts. The functor $G_l^\heartsuit$ is exact and zero-reflecting, hence faithful. Therefore, by \cite[Proposition 5.1]{exact-tensor}, $G^\heartsuit$ is surjective.

We next prove normality. By definition, $G_l(\mathbf 1_{\mathcal T''})\in\mathscr K_G^t$. Since $G_l$ is t-exact, $G_l(\mathbf 1_{\mathcal T''})$ lies in $\mathcal T^\heartsuit$. Moreover,
$G^\heartsuit G_l^\heartsuit(\mathbf 1_{(\mathcal T'')^\heartsuit})$ is a
t-trivial object lying in $(\mathcal T'')^\heartsuit$, hence lie in the smallest monoidal Serre subcategory of $(\mathcal T'')^\heartsuit$, that is, the ordinary trivial subcategory of $(\mathcal
T'')^\heartsuit$. Thus $G_l^\heartsuit(\mathbf 1_{(\mathcal T'')^\heartsuit})$ belongs to $\mathscr K^\oplus_{G^\heartsuit}$, and so $G^\heartsuit$ is normal by \cite[Proposition 3.5]{exact-tensor}.

It remains to identify the kernel. Since $F$ is t-exact and fully faithful, we have $\mathscr I_{F^\heartsuit}=\mathscr I_F\cap\mathcal T^\heartsuit$. Also, for $X\in\mathcal T^\heartsuit$, the object
$X$ belongs to $\mathscr K_G^t$ if and only if $G^\heartsuit(X)$ is ordinary trivial. Hence $\mathscr K_G^t\cap\mathcal T^\heartsuit=\mathscr K^\oplus_{G^\heartsuit}$. Using $\mathscr I_F=\mathscr
K_G^t$, we obtain
\[
\mathscr I_{F^\heartsuit} = \mathscr I_F\cap\mathcal T^\heartsuit = \mathscr K_G^t\cap\mathcal T^\heartsuit = \mathscr K^\oplus_{G^\heartsuit}.
\]
Thus the induced sequence on hearts is an exact sequence of finite tensor categories, and $G^\heartsuit$ admits the exact left adjoint
$G_l^\heartsuit$.
\end{proof}

To specify precisely in what sense the original sequence is recovered after passing to derived categories and then taking hearts, we use the following notion of equivalence of sequences.

\begin{definition}
Let
$\mathcal A' \xrightarrow{F} \mathcal A \xrightarrow{G} \mathcal A''$ and $\mathcal B' \xrightarrow{\widetilde F} \mathcal B \xrightarrow{\widetilde G} \mathcal B''$ be two sequences of tensor
categories. We say that these two sequences are \emph{equivalent} if there exist tensor equivalences  $\Phi':\mathcal A'\to\mathcal B'$, $ \Phi:\mathcal A\to\mathcal B$, and $ \Phi'':\mathcal
A''\to\mathcal B''$ together with monoidal natural isomorphisms $\Phi\circ F\cong \widetilde F\circ \Phi'$ and $ \Phi''\circ G\cong \widetilde G\circ \Phi$.
\end{definition}

\begin{corollary}\label{cor:derived-heart-recovers}
With the notation and assumptions of Proposition~\ref{prop:exact-seq-derived}, the sequence obtained by taking the hearts of
\[
D^b_{\mathcal A'}(\mathcal A) \xrightarrow{F'} D^b(\mathcal A) \xrightarrow{G'} D^b(\mathcal A'')
\]
with respect to the standard t-structures is equivalent to the original exact sequence
\[
\mathcal A' \xrightarrow{F} \mathcal A \xrightarrow{G} \mathcal A''.
\]
\end{corollary}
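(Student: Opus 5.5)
The plan is to write down the three equivalences explicitly, using the standard identification of the heart of $D^b$ with the underlying abelian category. For any abelian category $\mathcal B$, the heart of the standard t-structure on $D^b(\mathcal B)$ is the full subcategory of complexes with cohomology concentrated in degree $0$. The functor $\iota_{\mathcal B}:\mathcal B\to D^b(\mathcal B)^\heartsuit$, $X\mapsto X^\bullet$ (the stalk complex in degree $0$), is an equivalence with quasi-inverse $H^0$.

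First we check that $\iota_{\mathcal B}$ is a tensor functor when $\mathcal B$ is a finite tensor category. For stalk complexes $X^\bullet\widetilde\otimes Y^\bullet=(X\otimes Y)^\bullet$ on the nose, by the total-complex formula in Example~\ref{Ex:D is m-t}. The unit goes to $\mathbf 1^\bullet$. So $\iota_{\mathcal B}$ is strict monoidal with identity structure maps, and the monoidal structure on the heart is the one inherited from $D^b(\mathcal B)$, as in Corollary~\ref{heart}. We take $\Phi=\iota_{\mathcal A}$ and $\Phi''=\iota_{\mathcal A''}$.

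For $\mathcal A'$, recall that in Proposition~\ref{prop:exact-seq-derived} we identified $\mathcal A'$ with the full monoidal Serre subcategory $\mathscr I_F\subseteq\mathcal A$ through $F$, which is fully faithful. The heart of $D^b_{\mathcal A'}(\mathcal A)$ for the induced t-structure is $D^b_{\mathcal A'}(\mathcal A)\cap D^b(\mathcal A)^\heartsuit$. This consists of the complexes with $H^0\in\mathscr I_F$ and all other cohomology zero, and $\iota_{\mathcal A}$ identifies it with $\mathscr I_F$. So we set $\Phi'=\iota_{\mathcal A}\circ F:\mathcal A'\to (D^b_{\mathcal A'}(\mathcal A))^\heartsuit$. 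It is a tensor functor because $F$ is, it is fully faithful, and it is essentially surjective onto the heart. Hence it is a tensor equivalence.

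Next come the two squares.
\begin{itemize}
\item For the first square, $(F')^\heartsuit$ is the inclusion of hearts. Hence $\Phi\circ F=\iota_{\mathcal A}\circ F=(F')^\heartsuit\circ\Phi'$ holds literally, and the monoidal natural isomorphism is the identity.
\item For the second square, $G'=D^b(G)$ applies $G$ termwise, so $G'(X^\bullet)=G(X)^\bullet$. Thus $(G')^\heartsuit\circ\iota_{\mathcal A}=\iota_{\mathcal A''}\circ G$ as functors. Its monoidal structure maps are $G$'s structure isomorphisms $J_{X,Y}:G(X)\otimes G(Y)\to G(X\otimes Y)$ regarded as stalk maps, which is exactly how $D^b(G)$ is made monoidal. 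So the identity transformation is monoidal.
\end{itemize}

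The main obstacle is bookkeeping rather than mathematics. Two points need care. First, the monoidal structure of $D^b(G)$ must be the termwise one induced by $J$, so that no signs from the total complex intervene; this is clear since stalk complexes sit in degree $0$. Second, the identification of $\mathcal A'$ with $\mathscr I_F$ fixed in Proposition~\ref{prop:exact-seq-derived} must be used consistently. If the exact sequence is only given up to that identification, the equivalence $\Phi'$ absorbs it, and no further argument is needed.
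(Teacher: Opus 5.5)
Your proposal is correct and follows the same route as the paper. Both arguments identify each standard heart with the underlying category via stalk complexes, and identify the heart of $D^b_{\mathcal A'}(\mathcal A)$ with $\mathscr I_F\simeq\mathcal A'$ through the fully faithful $F$; under these identifications $(F')^\heartsuit$ becomes $F$ and $(G')^\heartsuit$ becomes $G$. The only difference is that you write the equivalences and the identity monoidal isomorphisms out explicitly, where the paper just calls them ``canonical.''
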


\begin{proof}
The hearts of $D^b(\mathcal A)$ and $D^b(\mathcal A'')$ with respect to the standard t-structures are canonically equivalent to $\mathcal A$ and
$\mathcal A''$, respectively. Moreover, by the convention $D^b_{\mathcal A'}(\mathcal A)=D^b_{\mathscr I_F}(\mathcal A)$, the heart of $D^b_{\mathcal A'}(\mathcal A)$ is canonically equivalent to
$\mathscr I_F$. Since $F$ is fully faithful, $F$ induces a tensor equivalence $\mathcal A'\simeq \mathscr I_F$.
Under these equivalences, the functor induced by $F'$ on hearts is identified with $F$, and the functor induced by $G'=D^b(G)$ on hearts is identified with $G$. Hence the sequence obtained by taking
hearts is equivalent to the original sequence of finite tensor categories.
\end{proof}

\subsection{Exact sequences of finite-dimensional Hopf algebras}

In this subsection, we prove our main result. Let us recall the definition of strictly exact sequences of Hopf algebras.

\begin{definition}\textup{(\cite{pri-exact-sequence})}\label{def:exact-hopf}
A \emph{strictly exact sequence of Hopf algebras} is a diagram
\[
K \stackrel{i}{\longrightarrow} H \stackrel{p}{\longrightarrow} T
\]
where $i$ and $p$ are Hopf algebra morphisms such that
\begin{enumerate}[\rm (1)]
\item $K$ is a normal Hopf subalgebra of $H$;
\item $H$ is right faithfully flat over $K$;
\item $p$ is a categorical cokernel of $i$,
\end{enumerate}
or, equivalently, setting $I=p^{-1}(0)$, such that
\begin{enumerate}[\rm (1)]
\item $I$ is a normal Hopf ideal of $H$;
\item $H$ is right faithful coflat over $T$;
\item $i$ is a categorical kernel of $p$,
\end{enumerate}
where $\ker(f)=\{x\in H\mid x_{(1)}\otimes f(x_{(2)})\otimes x_{(3)}=x_{(1)}\otimes 1\otimes x_{(2)}\}$ and $\operatorname{coker}(f)=T/Tf(H^{+})T$.
\end{definition}

In \cite{exact-tensor}, the author briefly introduced its relationship with exact sequences of tensor categories.

\begin{remark}\textup{(\cite[Remark 3.13]{exact-tensor})}\label{rem:tensor-to-hopf}
Let $\mathcal{C}' \xrightarrow{f} \mathcal{C} \xrightarrow{F} \mathcal{C}''$ be an exact sequence of tensor categories over $\mathbbm{k}$ such that $F$ admits an exact left adjoint. Assume that
$\mathcal{C}''$ admits a fiber functor $\omega: \mathcal{C}'' \to \operatorname{Vec}_{\mathbbm{k}}$. Setting
\[
H' = \operatorname{L}(\omega), \quad H = \operatorname{L}(\omega F), \quad K = \operatorname{L}(\omega F f),
\]
and denoting by $i: K \to H$ and $p: H \to H'$ the Hopf algebra morphisms induced by $f$ and $F$, respectively, we obtain a strictly exact sequence of Hopf algebras over $\mathbbm{k}$:
$K \xrightarrow{i} H \xrightarrow{p} H'$.

\end{remark}

Finally, we obtain the following theorem.

\begin{theorem}\label{thm:core}
Consider a sequence of finite-dimensional Hopf algebras
\[
K \xrightarrow{f} H \xrightarrow{g} T.
\]
Let
$F=f_*:K\text{-}\mathrm{comod}\to H\text{-}\mathrm{comod}$,
$G=g_*:H\text{-}\mathrm{comod}\to T\text{-}\mathrm{comod}$
be the induced tensor functors. The following are equivalent:
\begin{enumerate}[\rm (1)]
\item The sequence $K\xrightarrow{f}H\xrightarrow{g}T$ is a strictly exact sequence of finite-dimensional Hopf algebras.

\item The sequence
$K\text{-}\mathrm{comod} \xrightarrow{F} H\text{-}\mathrm{comod} \xrightarrow{G} T\text{-}\mathrm{comod}$
is an exact sequence of finite tensor categories, and $G$ admits an exact left adjoint.

\item The functor $F$ is fully faithful, and after identifying $K\text{-}\mathrm{comod}$ with $\mathscr I_F$ as a tensor category, and, with respect to the standard t-structures, the sequence
\[
D^b_{K\text{-}\mathrm{comod}}(H\text{-}\mathrm{comod}) \xrightarrow{\iota} D^b(H\text{-}\mathrm{comod}) \xrightarrow{D^b(G)} D^b(T\text{-}\mathrm{comod})
\]
is an exact sequence of rt-categories, where $\iota$ is the inclusion functor.
\end{enumerate}
\end{theorem}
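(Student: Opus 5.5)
We prove the cycle $(1)\Rightarrow(2)\Rightarrow(3)\Rightarrow(2)\Rightarrow(1)$, so that the two directions between (2) and (3) are supplied by Propositions~\ref{prop:exact-seq-derived} and \ref{prop:heart-exact-seq}, and the Hopf-algebraic content is isolated in $(1)\Leftrightarrow(2)$.

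\textbf{$(2)\Rightarrow(3)$.} Here $F$ is fully faithful by the definition of an exact sequence of finite tensor categories. Identify $K\text{-}\mathrm{comod}$ with $\mathscr I_F=\mathscr K_G^\oplus$, which is a dual-closed monoidal Serre subcategory of $H\text{-}\mathrm{comod}$. Proposition~\ref{prop:exact-seq-derived} then applies verbatim with $\mathcal A'=K\text{-}\mathrm{comod}$, $\mathcal A=H\text{-}\mathrm{comod}$, $\mathcal A''=T\text{-}\mathrm{comod}$ and the exact left adjoint $G_l$. It shows that $D^b_{K\text{-}\mathrm{comod}}(H\text{-}\mathrm{comod})\to D^b(H\text{-}\mathrm{comod})\to D^b(T\text{-}\mathrm{comod})$ is an exact sequence of rt-categories.

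\textbf{$(3)\Rightarrow(2)$.} All hearts are categories of finite-dimensional comodules over finite-dimensional Hopf algebras, hence finite abelian. So Proposition~\ref{prop:heart-exact-seq} gives an exact sequence of finite tensor categories on the hearts, and $G$ acquires an exact left adjoint. By the argument of Corollary~\ref{cor:derived-heart-recovers}, the heart of $D^b_{K\text{-}\mathrm{comod}}(H\text{-}\mathrm{comod})$ is $\mathscr I_F$. The inclusion restricts on hearts to $\mathscr I_F\hookrightarrow H\text{-}\mathrm{comod}$, and $D^b(G)$ restricts to $G$. Since $F$ is assumed fully faithful, $F\colon K\text{-}\mathrm{comod}\to\mathscr I_F$ is a tensor equivalence. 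Transporting along it shows that the original sequence is exact, because exactness of sequences is invariant under equivalence of sequences.

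\textbf{$(2)\Rightarrow(1)$.} Use Remark~\ref{rem:tensor-to-hopf} with $\omega$ the forgetful fiber functor on $T\text{-}\mathrm{comod}$. Then $\omega G$ and $\omega GF$ are the forgetful functors of $H$- and $K$-comodules. By Tannakian reconstruction, $\operatorname{L}(\omega)\cong T$, $\operatorname{L}(\omega G)\cong H$ and $\operatorname{L}(\omega G F)\cong K$. The induced Hopf morphisms are $g$ and $f$, since $F=f_*$ and $G=g_*$ are the corestriction functors. The remark then gives strict exactness of $K\to H\to T$. The one point to check is that the reconstruction isomorphisms are compatible with $f$ and $g$; this follows from the naturality of the coend construction.

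\textbf{$(1)\Rightarrow(2)$.} This is the step I expect to be the main obstacle.
\begin{itemize}
\item \emph{$F$ fully faithful.} Since $f$ is injective ($K$ is a Hopf subalgebra), $f_*$ embeds $K$-comodules as the $H$-comodules with coefficients in $K$. This embedding is fully faithful and its image is closed under subquotients.
\item \emph{$G$ surjective.} Since $g$ is surjective, every $T$-comodule is a subquotient of some $g_*(H)$, namely of a direct sum of copies of the regular comodule.
\item \emph{$\mathscr K_G^\oplus=\mathscr I_F$.} An $H$-comodule $V$ has trivial image under $G$ exactly when $(\mathrm{id}\otimes g)\rho(V)\subseteq V\otimes 1$. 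This means the coefficients of $V$ lie in $H^{\operatorname{co}T}$, which equals $K$ because $i$ is a categorical kernel of $p$.
\item \emph{Normality of $G$.} For an $H$-comodule $V$, the largest subcomodule with trivial $T$-coaction is $V^{\operatorname{co}T}$. Normality of $K$ (equivalently, of the Hopf ideal $I$) ensures that this is an $H$-subcomodule, which is exactly the definition of $G$ being normal.
\item \emph{Exact left adjoint.} The right adjoint of $g_*$ is the cotensor functor $H\square_T-$. For finite-dimensional Hopf algebras it is exact, being faithfully coflat by Nichols--Zoeller-type freeness. Its left adjoint is $W\mapsto (H\square_T W^*)^*$, equivalently the cotensor with the dual, and it is exact for the same reason.
\end{itemize}
Alternatively, one can cite the converse direction in \cite[Proposition 2.9 and Remark 3.13]{exact-tensor}, where Bruguières--Natale show that strictly exact sequences give exact sequences of comodule categories; I would rely on that result and verify only the existence and exactness of $G_l$ directly.
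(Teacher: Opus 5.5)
Your proposal is correct and follows essentially the same route as the paper: $(2)\Rightarrow(3)$ via Proposition~\ref{prop:exact-seq-derived}, $(3)\Rightarrow(2)$ via Proposition~\ref{prop:heart-exact-seq} together with the identification of the heart with $\mathscr I_F\simeq K\text{-}\mathrm{comod}$, $(2)\Rightarrow(1)$ via reconstruction and Remark~\ref{rem:tensor-to-hopf}, and $(1)\Rightarrow(2)$ with the exact left adjoint obtained from the cotensor right adjoint $H\Box_T-$ by duality. The only difference is that in $(1)\Rightarrow(2)$ you sketch a direct check of the Brugui\`eres--Natale axioms, whereas the paper simply cites \S 3 of their work; also, faithful coflatness is already part of (1), so your appeal to Nichols--Zoeller is unnecessary.
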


\begin{proof}
\textup{(1)} $\Longleftrightarrow$ \textup{(2)}:
Let $g:H\to T$ be a Hopf algebra morphism. The induced corestriction functor
$G=g_*:H\text{-}\mathrm{comod}\longrightarrow T\text{-}\mathrm{comod}$
admits a right adjoint given by the cotensor product $G_r=H\Box_T -$. For $V\in T\text{-}\mathrm{comod}$, the cotensor product $H\Box_TV$ is defined as the equalizer
\[
H\Box_TV \longrightarrow H\otimes V \rightrightarrows H\otimes T\otimes V,
\]
where the two arrows are induced by the right $T$-coaction on $H$ and the left $T$-coaction on $V$.

If $K \xrightarrow{f} H \xrightarrow{g} T$ is strictly exact, then $H$ is faithfully coflat as a right $T$-comodule. Hence $H\Box_T-$ is exact and zero-reflecting. By rigidity, the left adjoint of $G$
is obtained from the right adjoint by duality, namely $G_l(X)\cong {}^*G_r(X^*)$ for $X\in T\text{-}\mathrm{comod}$. Since duality is an exact equivalence, $G_l$ is also exact and zero-reflecting.
Therefore the sequence
\[
K\text{-}\mathrm{comod} \xrightarrow{F} H\text{-}\mathrm{comod} \xrightarrow{G} T\text{-}\mathrm{comod}
\]
is an exact sequence of finite tensor categories (see \cite[\S 3]{exact-tensor}), and $G$ admits an exact left adjoint.

Conversely, assume \textup{(2)}. Choosing the fiber functor $U:T\text{-}\mathrm{comod}\to\operatorname{Vec}_{\mathbbm k}$, the reconstruction theorem \textup{\cite[Section 5]{tensorcategories}} yields
a commutative diagram
\[
\begin{tikzcd}
K \arrow{d}[swap]{\cong} \arrow{r}{f} &
H \arrow{d}{\cong} \arrow{r}{g} &
T \arrow{d}{\cong} \\
L(UGF) \arrow{r}[swap]{i} &
L(UG) \arrow{r}[swap]{p} &
L(U).
\end{tikzcd}
\]
The bottom row is strictly exact by Remark \ref{rem:tensor-to-hopf}, hence so is the top row.
Thus \textup{(1)} and \textup{(2)} are equivalent.

\textup{(2)} $\Longrightarrow$ \textup{(3)}:
This follows directly from Proposition~\ref{prop:exact-seq-derived}.

\textup{(3)} $\Longrightarrow$ \textup{(2)}:
Taking hearts with respect to the standard t-structures and applying Proposition~\ref{prop:heart-exact-seq}, we obtain an exact sequence of finite tensor categories
\[
\big(D^b_{K\text{-}\mathrm{comod}}(H\text{-}\mathrm{comod})\big)^\heartsuit \to H\text{-}\mathrm{comod}\to T\text{-}\mathrm{comod},
\]
and the last functor admits an exact left adjoint. Since
$\big(D^b_{K\text{-}\mathrm{comod}}(H\text{-}\mathrm{comod})\big)^\heartsuit \simeq \mathscr I_F$
and $F$ is fully faithful, this sequence is equivalent to
$K\text{-}\mathrm{comod}\xrightarrow{F}H\text{-}\mathrm{comod} \xrightarrow{G}T\text{-}\mathrm{comod}$.
Thus \textup{(2)} holds.
\end{proof}

\end{document}